\algrenewcommand\algorithmicrequire{\textbf{Precondition:}}
\algrenewcommand\algorithmicensure{\textbf{Postcondition:}}
\algnewcommand\algorithmicforeach{\textbf{for each}}
\newcommand{\A}{\ensuremath \mathcal{A}}
\newcommand{\B}{\ensuremath \mathcal{B}}
\newcommand{\Q}{\ensuremath \mathbb{Q}}
\newcommand{\R}{\ensuremath \mathbb{R}}
\newcommand{\C}{\ensuremath \mathbb{C}}
\newcommand{\Z}{\ensuremath \mathbb{Z}}
\newcommand{\N}{\ensuremath \mathbb{N}}
\newcommand{\isom}{\ensuremath \cong}
\newcommand{\gmax}{\ensuremath G_{W}^{\max}}
\newcommand{\gsmith}{\ensuremath G_{W}^{\text{Smith}}}
\theoremstyle{plain}
\newtheorem{theorem}{Theorem}[section]
\newtheorem{lemma}[theorem]{Lemma}
\theoremstyle{definition}
\newtheorem{definition}[theorem]{Definition}
\newtheorem{example}[theorem]{Example}
\newcommand{\thmref}[1]{Theorem \ref{#1}}
\newcommand{\lemref}[1]{Lemma \ref{#1}}
\newcommand{\exampleref}[1]{Example \ref{#1}}
\title{Two Algorithms to Compute Symmetry Groups for Landau-Ginzburg Models}
\author{Nathan Cordner\thanks{ncordner@bu.edu} \\ Boston University Department of Computer Science}
\date{June 27, 2018}
\begin{document}

\newgeometry{top = 1 in, left = 1 in, right = 1 in, bottom = 1 in}

\maketitle

\begin{abstract}
Landau-Ginzburg mirror symmetry studies isomorphisms between graded Frobenius algebras, known as A- and B-models. Fundamental to constructing these models is the computation of the finite, Abelian \emph{maximal symmetry group} $G_{W}^{\max}$ of a given polynomial $W$. For \emph{invertible} polynomials, which have the same number of monomials as variables, a generating set for this group can be computed efficiently by inverting the \emph{polynomial exponent matrix}. However, this method does not work for \emph{noninvertible} polynomials with more monomials than variables since the resulting exponent matrix is no longer square.

A previously conjectured algorithm to address this problem relies on intersecting groups generated from \emph{submatrices} of the exponent matrix. We prove that this method is correct, but intractable in general. We overcome intractability by presenting a group isomorphism based on the Smith normal form of the exponent matrix. We demonstrate an algorithm to compute $\gmax$ via this isomorphism, and show its efficiency in all cases.

\end{abstract}

\newpage 

\section{Introduction}

In the context of Landau-Ginzburg mirror symmetry, two different physical theories arise known as Landau-Ginzburg A- and B-models. These are graded Frobenius algebras, built using a nondegenerate weighted homogeneous polynomial $W$ and a related group of symmetries $G$ of $W$. \cite{FJR07} have constructed the A-model theories, which are a special case of what is known as \emph{FJRW theory}. \cite{IV}, \cite{Ka1,Ka2,Ka3}, and \cite{Kra09} have constructed the B-model theories, which correspond to an \emph{orbifolded Milnor ring}. In many cases, both A- and B-model theories extend to whole families of Frobenius algebras, called \emph{Frobeinus manifolds}.

For the so-called \emph{invertible} polynomials, \cite{BH}, \cite{Hen}, and \cite{Kra09} described the construction of a dual (or transpose) polynomial $W^{T}$ and a dual group $G^{T}$. One formulation of the Landau-Ginzburg mirror symmetry conjecture states that the A-model of a polynomial-group pair $(W,G)$ should be isomorphic to the B-model of the dual pair $(W^{T}, G^{T})$ on the level of graded Frobenius algebras, and is written as $\A[W,G] \isom \B[W^{T}, G^{T}]$.  \cite{Kra09}, \cite{FJJS11}, and \cite{FJR07} have proven this conjecture in many cases, although the proof of the full conjecture remains open.

One way to generalize mirror symmetry is to include a larger class of polynomials called \emph{noninvertible}. Together with the invertible polynomials, these make up the class of \emph{admissible} polynomials. Many of the mirror symmetry constructions immediately generalize for noninvertible polynomials, but not all \cite[see][]{Cor15}. An open area of research focuses on developing the ideas necessary for mirror symmetry to include all such admissible polynomials.

Fundamental to the construction of Landau-Ginzburg A- and B-models is the group of symmetries for a given polynomial $W$. The largest group allowed for an A-model using polynomial $W$ is the \emph{maximal symmetry group}, denoted $\gmax$. This is a finite, Abelian group when $W$ is admissible. In the case that $W$ is invertible, a generating set for $\gmax$ is given by the columns of the inverse polynomial exponent matrix. This method does not work when $W$ is noninvertible, since the shape of the exponent matrix is no longer square. 

In this paper we will focus on developing and analyzing two characterizations of $\gmax$ which yield computational methods that include the noninvertible case. In Section 3, we examine a previously conjectured characterization which relates $\gmax$ as the intersection of groups generated by all the submatrices of the exponent matrix of $W$. This is the natural extension of the method used to compute $\gmax$ for invertible polynomials (see \lemref{invert_cols_gmax}). In Section 4, we present an alternative characterization of $\gmax$ via a group isomorphism based on the Smith normal form of the exponent matrix.

In turn, these characterizations yield algorithms to compute $\gmax$. We will show that Algorithm \ref{alg:submatrix-gmax-improved} based off the submatrix characterization is correct, but not efficient. In particular, we exhibit a family of cases where its running time is exponential in the size of its input. We will also show that Algorithm \ref{alg:smith-form-gmax} based off the Smith normal form is correct, and that it runs in polynomial time in terms of the size of the exponent matrix.

\section{Background}

Here we will introduce some of the concepts needed to understand the theory of this paper. 

\subsection{Polynomials}

\begin{definition}
For a polynomial $W \in \C[x_{1},\dots,x_{n}]$, we say that $W$ is \textit{nondegenerate} if it has an isolated critical point at the origin.
\end{definition}

\begin{definition}
Let $W \in \C[x_{1},\dots,x_{n}]$. We say that $W$ is \textit{quasihomogeneous} if there exist positive rational numbers $q_{1},\dots,q_{n}$ such that for any $c \in \C$, $W(c^{q_{1}}x_{1},\dots,c^{q_{n}}x_{n}) = c W(x_{1},\dots,x_{n})$.
\end{definition}

We often refer to the $q_{i}$ as the \textit{quasihomogeneous weights} of a polynomial $W$, or just simply the \textit{weights} of $W$, and we write the weights in vector form $J = (q_{1}, \dots, q_{n})$. 

\begin{definition}\label{admissible}
$W \in \C[x_{1},\dots,x_{n}]$ is \emph{admissible} if $W$ is nondegenerate and quasihomogeneous with unique weights, having no monomials of the form $x_{i}x_{j}$ for $i \ne j$, $i, j \in \{1, \dots, n\}$.
\end{definition}

The condition that $W$ have no cross-term monomials is necessary for constructing the A-model \cite[see][]{FJR07}. Because the construction of $\A[W,G]$ requires an admissible polynomial, we will only be concerned with admissible polynomials in this paper. In order for a polynomial to be admissible, it needs to have at least as many monomials as variables. Otherwise its quasihomogeneous weights cannot be uniquely determined. We now state the main subdivision of the admissible polynomials.

\begin{definition}
Let $W$ be an admissible polynomial. We say that $W$ is \textit{invertible} if it has the same number of monomials as variables. If $W$ has more monomials than variables, then it is \textit{noninvertible}.
\end{definition}

We observe that if $W$ is invertible, we can rescale variables to force each coefficient $c_{i}$ to equal one---which we will do in this paper. The invertible polynomials can also be decomposed into sums of three types of polynomials, called the \emph{atomic types}.

\begin{theorem}[\cite{KS}]
Any invertible polynomial is the decoupled sum of polynomials in one of three atomic types:
\begin{align*}
\begin{array}{rl}\text{Fermat type: } & W = x^{a}, \\ 
\text{Loop type: } & W = x_{1}^{a_{1}}x_{2} + x_{2}^{a_{2}}x_{3} + \dots + x_{n}^{a_{n}}x_{1}, \\
\text{Chain type: } & W = x_{1}^{a_{1}}x_{2} + x_{2}^{a_{2}}x_{3} + \dots + x_{n}^{a_{n}}.
\end{array}
\end{align*}
\end{theorem}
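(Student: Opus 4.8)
The plan is to prove the Kreuzer--Skarke classification by translating the analytic nondegeneracy condition into a rigid combinatorial constraint on the exponent matrix. After rescaling all coefficients to $1$, write $W = \sum_{i=1}^{n} \prod_{j=1}^{n} x_{j}^{E_{ij}}$ with $n$ monomials in $n$ variables, and let $E = (E_{ij})$ be the $n \times n$ exponent matrix of nonnegative integers. Quasihomogeneity means the weight vector $J$ satisfies $E J^{T} = (1,\dots,1)^{T}$, and uniqueness of $J$ for this square system forces $E$ to be nonsingular; this is where I first invoke admissibility. Since $\det E \neq 0$, the Leibniz expansion has a nonvanishing term, so there is a permutation $\sigma$ with $E_{i\sigma(i)} > 0$ for every $i$. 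Reindexing the monomials by $\sigma^{-1}$, I may assume $E_{ii} > 0$ for all $i$: each monomial $M_{i}$ \emph{owns} the variable $x_{i}$, in which it appears to a positive power (and, by the no-cross-term hypothesis, to a power at least $2$ whenever $M_{i}$ carries a second variable).

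The heart of the argument, and the step I expect to be the main obstacle, is to show that nondegeneracy forces a very rigid incidence pattern: each $M_{i}$ is either the pure power $x_{i}^{a_{i}}$ or has the form $x_{i}^{a_{i}} x_{k}$ with a single additional variable appearing to the \emph{first} power, and moreover each $x_{k}$ occurs as such a \emph{pointer target} in at most one monomial. I would prove this by contradiction through a dimension count on the critical locus $\{\nabla W = 0\}$: whenever a monomial touches a third variable, carries a pointer variable to a power $\ge 2$, or two monomials share a common pointer target, one can set a suitable collection of the owned variables to zero and exhibit a whole coordinate axis (or higher-dimensional subspace) annihilating every partial derivative, contradicting the isolated critical point at the origin. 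Organizing this vanishing bookkeeping uniformly across all configurations --- rather than verifying it on representative small cases --- is the delicate part, and is essentially the content of the Kreuzer--Skarke nondegeneracy criterion.

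Granting this structural lemma, I would encode the data as a directed graph on the vertex set $\{1,\dots,n\}$, identifying variables with monomials via the matching, drawing an edge $i \to k$ exactly when $M_{i} = x_{i}^{a_{i}} x_{k}$ with $k \neq i$, and leaving $i$ with no outgoing edge when $M_{i} = x_{i}^{a_{i}}$ is a pure power. The lemma says precisely that every vertex has out-degree at most $1$ and in-degree at most $1$, so each weakly connected component is either a simple directed cycle or a simple directed path. A cycle yields a loop-type summand; a path yields a chain-type summand whose terminal (out-degree-$0$) vertex is the pure power $x_{t}^{a_{t}}$; and an isolated vertex is a Fermat monomial $x^{a}$.

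Finally, I would check that the decomposition is genuinely \emph{decoupled}. Because the graph components partition the vertex set, and each monomial $M_{i}$ involves only $x_{i}$ together with its unique out-neighbor, while the in-degree bound confines the occurrences of any variable $x_{j}$ to the single component containing $j$, the variable sets of distinct components are pairwise disjoint. Hence $W$ is the sum, over these components, of Fermat, loop, and chain polynomials on disjoint sets of variables, which is exactly the asserted atomic decomposition.
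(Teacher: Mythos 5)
The paper itself offers no proof of this theorem --- it is imported verbatim from \cite{KS} as background --- so your sketch has to be judged on its own, against the standard Kreuzer--Skarke argument. In outline you follow that route correctly: uniqueness of the weights for the square system $EJ^{T} = \mathbf{1}$ forces $\det E \neq 0$; a nonvanishing Leibniz term yields the matching between monomials and variables; the structural lemma bounds out-degree and in-degree by one; and the decomposition of such a digraph into simple paths, simple cycles, and isolated vertices is exactly the chain/loop/Fermat trichotomy, with decoupling following from disjointness of the components. All of that is sound, and the final graph-theoretic and decoupling steps are complete as written.

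The gap is the structural lemma itself, which you rightly call the heart of the argument but then defer to ``the Kreuzer--Skarke nondegeneracy criterion'' --- that is, to the very result being proved --- and the one mechanism you do propose cannot close every case. You claim each bad configuration is killed by ``setting a suitable collection of the owned variables to zero'' so that a coordinate axis or coordinate subspace lies in the critical locus. This works for an unserved variable and for a pointer exponent $\ge 2$ (e.g.\ in $x_{1}^{a}x_{2}^{2} + x_{2}^{b}$ the whole $x_{1}$-axis is critical), and a three-variable monomial is handled by pigeonhole since it serves no variable. But it provably fails for the shared-pointer-target case: take $W = x^{2}z + y^{2}z + z^{2}$. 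Every variable is served, the exponent matrix is invertible, the in-degree of $z$ is two, and $W$ is degenerate for all nonzero coefficients --- yet no coordinate subspace of positive dimension lies in the critical locus, since $\partial W / \partial z = x^{2} + y^{2} + 2z$ is nonzero on each punctured coordinate axis. The degeneracy is witnessed only by the non-coordinate lines $\{(t, \pm i t, 0)\}$, obtained by solving $x^{2} + y^{2} = 0$ over $\C$. So ruling out in-degree $\ge 2$ requires exhibiting critical curves that are not coordinate subspaces, which is precisely where the Kreuzer--Skarke criterion is subtler than the bookkeeping you describe; as stated, your case analysis cannot be completed. (A smaller slip: your parenthetical that the owned variable appears to power at least $2$ does not follow from the no-cross-term hypothesis alone --- at that stage $M_{i} = x_{i}x_{k}^{2}$ is still possible --- it only emerges after the lemma is established, possibly after re-choosing the matching.)
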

We also assume that the $a_{i} \ge 2$ to avoid terms of the form $x_{i}x_{j}$ for $i \ne j$. This is the fundamental decomposition result for invertible polynomials.

\subsection{The Exponent Matrix}

For our computations, it is often useful to represent the polynomials in matrix form.

\begin{definition}
Let $W \in \C[x_{1},\dots, x_{n}]$. If we write $W$ as a sum of monomials $W = \sum_{i = 1}^{m} c_{i} \prod_{j = 1}^{n} x_{j}^{a_{ij}}$, then the associated \textit{exponent matrix} is defined to be $A = (a_{ij})$. 
\end{definition}

Note that $A$ is an $m \times n$ matrix, where $m$ is the number of monomials of $W$ and $n$ is the number of variables of $W$. Since the number of monomials must be greater than or equal to the number of variables for $W$ to be admissible, we will always have $m \ge n$. In the special case where $W$ is invertible, we get $m = n$. We note that these matrices are of full rank and are indeed invertible in this case, as the name of this class of polynomials suggests.

We would like to get a sense of how large $A$ can be in terms of $n$ and the quasihomogeneous weights of $W$. We define a norm for $A$ to pick out its largest entry, and write $\|A\| = \max_{i,j} |a_{ij}| = \max_{i,j} a_{ij}$ since the entries of $A$ are nonnegative integers.

\begin{lemma}
Given admissible $W$ with weight vector $\mathbf{q} = (q_{1}, \dots, q_{n})$, then $\|A\| \le \lfloor \max \{ \frac{1}{q_{i}} \} \rfloor$.
\end{lemma}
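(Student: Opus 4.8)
The plan is to reduce the bound to a statement about a single row of $A$, using the fact that quasihomogeneity forces every monomial of $W$ to have weighted degree exactly $1$. First I would unpack the defining identity $W(c^{q_{1}}x_{1},\dots,c^{q_{n}}x_{n}) = c\,W(x_{1},\dots,x_{n})$ on the monomial level. Substituting the scaled variables into the $i$-th monomial $c_{i}\prod_{j}x_{j}^{a_{ij}}$ produces a factor $c^{\sum_{j}a_{ij}q_{j}}$, so the left-hand side is $\sum_{i} c_{i}\, c^{\sum_{j}a_{ij}q_{j}}\prod_{j}x_{j}^{a_{ij}}$ while the right-hand side is $\sum_{i} c_{i}\, c\prod_{j}x_{j}^{a_{ij}}$. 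Because the distinct monomials $\prod_{j}x_{j}^{a_{ij}}$ are linearly independent and the identity holds for all $c \in \C$, matching coefficients gives
\[
\sum_{j=1}^{n} a_{ij}\,q_{j} = 1 \qquad \text{for every row } i.
\]

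With this relation in hand, the bound is immediate from nonnegativity. Since every $a_{ij} \ge 0$ and every $q_{j} > 0$, each individual summand is dominated by the whole sum, so for any fixed entry $a_{ij}$ we have $a_{ij}\,q_{j} \le \sum_{k} a_{ik} q_{k} = 1$, hence $a_{ij} \le 1/q_{j} \le \max_{k}\{1/q_{k}\}$. Finally, because the entries $a_{ij}$ are nonnegative integers, the inequality $a_{ij} \le \max_{k}\{1/q_{k}\}$ sharpens to $a_{ij} \le \lfloor \max_{k}\{1/q_{k}\} \rfloor$, and taking the maximum over all $i,j$ yields $\|A\| \le \lfloor \max_{k}\{1/q_{k}\} \rfloor$, as claimed.

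I expect the only genuinely delicate point to be the first step, namely justifying that matching the scaling factor $c$ on each monomial is legitimate; this rests on the monomials being \emph{distinct} (so that they form a linearly independent set over $\C$) and on the identity holding for \emph{every} $c$, not just one value. Everything after that is an elementary consequence of the nonnegativity of the exponents and weights together with the integrality of the $a_{ij}$, so no further machinery is needed.
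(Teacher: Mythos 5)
Your proof is correct and takes essentially the same approach as the paper's: both rest on the weighted-degree relation $\sum_{j} a_{ij} q_{j} = 1$ for each monomial, followed by the termwise bound $a_{ij} q_{j} \le 1$ from nonnegativity and the integrality of the $a_{ij}$ to justify the floor. If anything, your write-up is more complete, since the paper asserts the degree relation directly from quasihomogeneity (and spends its remaining effort showing the bound is tight), whereas you explicitly justify the coefficient-matching step via linear independence of distinct monomials.
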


\begin{proof}
By the quasihomogeneous property, a monomial $x_{1}^{a_{1}} \dots x_{n}^{a_{n}}$ is admissible under $\mathbf{q}$ if and only if $\sum_{i = 1}^{n} a_{i} q_{i} = 1$. We produce the largest exponent by considering $q_{k} = \min\{q_{i}\}$, with $q_{k} = \frac{1}{a}$ for some $a \ge 2$. In this case, the monomial $x_{i}^{a}$ is admissible. Therefore, for an admissible polynomial $W$, $\|A\| \le \lfloor \max \{ \frac{1}{q_{i}} \} \rfloor$ and this is a tight upper bound.
\end{proof}

Also of interest is the number of rows that the exponent matrix allows for.

\begin{lemma}
There exist weight systems that admit exponentially many monomials.
\end{lemma}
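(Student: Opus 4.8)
The plan is to prove this existence statement by exhibiting a single explicit weight system and counting the monomials compatible with it. Recall from the proof of the preceding lemma that, for a fixed weight vector $\mathbf{q} = (q_{1}, \dots, q_{n})$, a monomial $x_{1}^{a_{1}} \cdots x_{n}^{a_{n}}$ is quasihomogeneous of degree one precisely when $\sum_{i=1}^{n} a_{i} q_{i} = 1$. Thus the number of monomials admitted by a weight system equals the number of nonnegative integer solutions of this single linear equation, i.e., the number of lattice points on the corresponding rational simplex. The whole problem therefore reduces to finding weights for which that simplex carries exponentially many lattice points in $n$.

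First I would take the uniform weight system $q_{1} = \cdots = q_{n} = \frac{1}{n}$. Under this choice the degree-one condition collapses to $a_{1} + \cdots + a_{n} = n$, so the admitted monomials are in bijection with the nonnegative integer solutions of this equation. By the standard stars-and-bars count there are exactly $\binom{2n-1}{n-1}$ of them. Next I would bound this quantity below to confirm exponential growth: using $\binom{2n-1}{n-1} = \tfrac{1}{2}\binom{2n}{n}$ together with the elementary estimate $\binom{2n}{n} \ge \frac{4^{n}}{2n+1}$ (equivalently, Stirling's formula gives $\binom{2n}{n} \sim \frac{4^{n}}{\sqrt{\pi n}}$), the number of admitted monomials is at least $\frac{4^{n}}{2(2n+1)}$, which dominates $c^{n}$ for every $c < 4$. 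Hence the uniform weights admit exponentially many monomials in $n$.

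Finally I would verify that this weight system is genuinely realized by an admissible polynomial, so that the example is legitimate rather than vacuous. The Fermat polynomial $x_{1}^{n} + \cdots + x_{n}^{n}$ is nondegenerate, contains no cross-term monomial $x_{i} x_{j}$ with $i \ne j$, and forces $n q_{i} = 1$ for each $i$, so $\mathbf{q} = (\tfrac{1}{n}, \dots, \tfrac{1}{n})$ is its unique weight system; it therefore witnesses that these weights arise from an admissible polynomial. Since for $n \ge 3$ none of the remaining degree-one monomials has the forbidden shape $x_{i} x_{j}$, one may in fact enlarge this polynomial by any of the counted monomials and remain admissible, so the exponent matrix of such a $W$ can acquire exponentially many rows.

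I do not expect any genuine analytic difficulty here: the combinatorial count and its lower bound are routine. The only step requiring care is the last one—ensuring that the simplex being counted is attached to a bona fide admissible weight system rather than an arbitrary tuple of rationals—and I address that by pinning the weights to the explicit Fermat witness.
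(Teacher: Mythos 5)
Your proposal is correct and takes essentially the same route as the paper: a uniform weight system (the paper chooses $q_{i} = \frac{1}{n+1}$ via $a = n+1$, you choose $q_{i} = \frac{1}{n}$), a stars-and-bars count of the solutions to $\sum_{i} a_{i} q_{i} = 1$, and an exponential lower bound on the resulting binomial coefficient (your $\binom{2n-1}{n-1} \ge \frac{4^{n}}{2(2n+1)}$ versus the paper's $\binom{2n}{n+1} \ge 2^{n-1}$). Your extra step verifying that the weights are genuinely realized by the admissible Fermat polynomial $x_{1}^{n} + \cdots + x_{n}^{n}$ is a careful addition the paper omits, but it does not alter the substance of the argument.
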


\begin{proof}
Consider the homogeneous weight system $\mathbf{q} = (\frac{1}{a}, \dots, \frac{1}{a})$ for some $a \ge 2$. Each admissible monomial $x_{1}^{c_{1}}\dots x_{n}^{c_{n}}$ satisfies $\sum_{i = 1}^{n} \frac{1}{a} \cdot c_{i} = 1$ which implies that $\sum_{i = 1}^{n} c_{i} = a$. Therefore any solution to the equation $c_{1} + \dots + c_{n} = a$ in nonnegative integers yields a new monomial. We get the exact solution
\[m = \left(\begin{matrix} a + n - 1 \\ a \end{matrix} \right).\]

Weight systems of this form admit many monomials in terms of the number of variables $n$. Choosing $a = n + 1$, for example, we get
\[m = \left(\begin{matrix} 2n \\ n + 1 \end{matrix} \right) = \frac{(2n)!}{(n+1)!(n-1)!} = \frac{2n(2n-1)\cdots(n+2)}{(n-1)(n-2)\cdots(2)(1)} \ge 2 \cdots 2 \; (n-1 \text{ times})= 2^{n-1} .\]
Therefore it is possible for $m$ to be exponential in terms of $n$.

\end{proof}

Though $A$ can be exponentially large as an input in terms of $n$, we will focus on more reasonably sized problems. In the algorithms that follow, we will consider $A$ to be of polynomial size in $n$ if both $m$ and $\|A\|$ are of polynomial size in $n$.

\subsection{Symmetry Groups}

We are now ready to define the maximal symmetry group for admissible polynomials.

\begin{definition}\label{gmax}
Let $W$ be an admissible polynomial. We define the \textit{maximal diagonal symmetry group} of $W$ to be $G_{W}^{\max} = \{(\zeta_{1}, \dots, \zeta_{n}) \in (\C^\times)^{n} \mid W(\zeta_{1}x_{1},\dots,\zeta_{n}x_{n}) = W(x_{1},\dots,x_{n})    \}$.
\end{definition}

\cite{FJR07} and \cite{ABS11} show that $G_{W}^{\max}$ is finite and that each coordinate of every group element is a root of unity. The group operation in $G_{W}^{\max}$ is coordinate-wise multiplication. Converting to additive notation, we use the map $(e^{2\pi i \theta_{1}},\dots,e^{2\pi i \theta_{n}}) \mapsto (\theta_{1}, \dots, \theta_{n}) \mod \Z$ taking $G_{W}^{\max}$ to $(\Q / \Z)^{n}$. Hence we will write $G_{W}^{\max} = \{ \mathbf{g} \in (\Q/\Z)^{n} \mid A\mathbf{g} \in \Z^{m} \}$, where $A$ is the $m \times n$ exponent matrix of $W$. In this notation we have the following

\begin{definition}
The group $G_{W}^{\max}$ is a subgroup of $(\Q/\Z)^{n}$ with respect to coordinate-wise addition. For $\mathbf{g} \in G_{W}^{\max}$, we can write $\mathbf{g}$ uniquely as $(g_{1}, \dots, g_{n})$, where each $g_{i}$ is a rational number in the interval [0,1). The $g_{i}$ are called the \textit{phases} of $\mathbf{g}$.
\end{definition}

We will occasionally say that a group element represented by its phases is in its \emph{canonical form}. However, as a matter of convenience we will sometimes use equivalent representatives of the $g_{i}$ that lie outside the interval [0,1) to write down group elements. One such example of this occurs in the following

\begin{lemma}[see \cite{ABS11} or \cite{Kra09}]\label{invert_cols_gmax}
If $W$ is invertible, then $\gmax$ is generated by the columns of $A^{-1}$. We write $\gmax = \langle \text{cols of } A^{-1} \rangle$.
\end{lemma}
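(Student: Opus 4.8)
The plan is to treat this as a direct exercise in linear algebra over $\Z$, exploiting the fact that invertibility of $W$ makes $A$ a square $n \times n$ integer matrix of full rank, so that $A^{-1}$ exists over $\Q$ (here $m = n$). Throughout I would work with the additive characterization $\gmax = \{\mathbf{g} \in (\Q/\Z)^{n} \mid A\mathbf{g} \in \Z^{n}\}$ established just above. The first point to verify is that this membership condition descends to the quotient $(\Q/\Z)^{n}$: if $\tilde{\mathbf{g}}$ and $\tilde{\mathbf{g}} + \mathbf{k}$ are two lifts of $\mathbf{g}$ with $\mathbf{k} \in \Z^{n}$, then $A(\tilde{\mathbf{g}} + \mathbf{k}) = A\tilde{\mathbf{g}} + A\mathbf{k}$, and $A\mathbf{k} \in \Z^{n}$ because $A$ has integer entries, so whether $A\mathbf{g} \in \Z^{n}$ is independent of the chosen lift.

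For the inclusion $\langle \text{cols of } A^{-1}\rangle \subseteq \gmax$, I would write $A^{-1} = [\mathbf{v}_{1} \mid \cdots \mid \mathbf{v}_{n}]$ and observe that $A\mathbf{v}_{j} = \mathbf{e}_{j} \in \Z^{n}$ for each $j$, where $\mathbf{e}_{j}$ is the $j$th standard basis vector. Hence each column $\mathbf{v}_{j}$, read modulo $\Z$, lies in $\gmax$. Since $\gmax$ is a subgroup of $(\Q/\Z)^{n}$, it must contain the entire subgroup generated by the $\mathbf{v}_{j}$.

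For the reverse inclusion $\gmax \subseteq \langle \text{cols of } A^{-1}\rangle$, I would take an arbitrary $\mathbf{g} \in \gmax$ with lift $\tilde{\mathbf{g}} \in \Q^{n}$ and set $\mathbf{b} := A\tilde{\mathbf{g}}$, which lies in $\Z^{n}$ by hypothesis. Multiplying through by $A^{-1}$ yields $\tilde{\mathbf{g}} = A^{-1}\mathbf{b} = \sum_{j=1}^{n} b_{j}\,\mathbf{v}_{j}$, an integer linear combination of the columns of $A^{-1}$ since each $b_{j} \in \Z$; reducing modulo $\Z$ then shows $\mathbf{g} \in \langle \text{cols of } A^{-1}\rangle$. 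Combining the two inclusions gives the equality asserted by the lemma.

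I do not anticipate a serious obstacle here; the only steps demanding genuine care are the well-definedness of the defining condition on the quotient and the bookkeeping of the reduction modulo $\Z$ in both directions. It is worth remarking that $A^{-1}$ generally has honestly rational (non-integer) entries, which is precisely why its columns represent nontrivial torsion elements of $(\Q/\Z)^{n}$; the determinant of $A$ controls the denominators appearing and hence the order of the resulting finite group. This same picture is exactly what breaks down in the noninvertible case $m > n$, where $A$ is no longer square and $A^{-1}$ fails to exist, motivating the alternative characterizations pursued in the later sections.
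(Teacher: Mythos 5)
Your proof is correct. Note, however, that the paper itself offers no proof of this lemma --- it is stated with a citation to the literature (Artebani--Boissi\`ere--Sarti and Krawitz), so there is no internal argument to compare against. Your two-inclusion argument is the standard one: the forward inclusion from $A\mathbf{v}_{j} = \mathbf{e}_{j} \in \Z^{n}$, the reverse from $\tilde{\mathbf{g}} = A^{-1}(A\tilde{\mathbf{g}})$ with $A\tilde{\mathbf{g}} \in \Z^{n}$, and --- a detail the cited sources often leave implicit --- the check that membership in $\gmax$ is independent of the lift, which holds precisely because $A$ has integer entries. Your closing remark is also on target: the argument uses only that $A$ is a square integer matrix invertible over $\Q$ (quasihomogeneity and nondegeneracy play no role), which is exactly the observation the paper later relies on in the proof of Theorem \ref{gmax_submatrix_thm} when it applies this lemma to submatrices $A_{i}$ whose associated polynomials $W_{i}$ may be degenerate.
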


Though this method uses column vectors, throughout this paper we will usually represent group elements as row vectors. We also note that the columns of $A^{-1}$ are not necessarily in canonical form. However, the resulting group elements can certainly be represented by their phases. Note also that this characterization of $\gmax$ works only for invertible polynomials. When $W$ is noninvertible, its exponent matrix is no longer square and hence is not invertible. Generalizing the result of \lemref{invert_cols_gmax} to include noninvertible polynomials is the focus of the next section. One additional result that we will also later generalize (see \lemref{new_size_gmax}) is
\begin{lemma}[see \cite{ABS11}]\label{size_gmax}
If $W$ is invertible, then the size of $\gmax$ is equal to the absolute value of the determinant of the exponent matrix of $W$. We write $|\gmax| = |\det(A)|$. 
\end{lemma}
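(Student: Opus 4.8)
The plan is to reduce the computation of $|\gmax|$ to the index of a sublattice and then invoke the classical relationship between sublattice index and determinant. By \lemref{invert_cols_gmax}, when $W$ is invertible $\gmax$ is generated inside $(\Q/\Z)^{n}$ by the $n$ columns of $A^{-1}$. I would package this generating data as a homomorphism $\phi \colon \Z^{n} \to (\Q/\Z)^{n}$ sending the standard basis vector $\mathbf{e}_{j}$ to the class of the $j$-th column of $A^{-1}$ modulo $\Z^{n}$; equivalently $\phi(\mathbf{v}) = A^{-1}\mathbf{v} \bmod \Z^{n}$. Since $A$ is an integer matrix it is rational-invertible, so $A^{-1}$ has rational entries and the columns represent genuine torsion elements of $(\Q/\Z)^{n}$; that $\phi$ surjects onto $\gmax$ is exactly the content of \lemref{invert_cols_gmax}.

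First I would compute $\ker\phi$. A vector $\mathbf{v} \in \Z^{n}$ lies in $\ker\phi$ precisely when $A^{-1}\mathbf{v} \in \Z^{n}$, which happens if and only if $\mathbf{v} \in A\Z^{n}$. Hence $\ker\phi = A\Z^{n}$, and the first isomorphism theorem gives $\gmax \isom \Z^{n}/A\Z^{n}$. Because $A$ is invertible over $\Q$ (the exponent matrix of an invertible polynomial is full rank, so $\det A \ne 0$), the image $A\Z^{n}$ is a full-rank sublattice of $\Z^{n}$, and the quotient is a finite group whose order is the index $[\Z^{n} : A\Z^{n}]$.

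The remaining step, and the only substantive one, is the identity $[\Z^{n} : A\Z^{n}] = |\det A|$. I would either cite this standard lattice fact directly or prove it by reducing $A$ to its Smith normal form $A = UDV$ with $U, V \in GL_{n}(\Z)$ unimodular and $D = \mathrm{diag}(d_{1}, \dots, d_{n})$. Since $U$ and $V$ induce automorphisms of $\Z^{n}$, the index is unchanged on passing from $A$ to $D$, giving $[\Z^{n} : A\Z^{n}] = [\Z^{n} : D\Z^{n}] = \prod_{i} d_{i} = |\det D| = |\det A|$. Combining with the isomorphism above yields $|\gmax| = |\det A|$.

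I expect the main difficulty to be bookkeeping rather than conceptual: one must verify that $\phi$ is well defined on $\Z^{n}$ and genuinely surjects onto $\gmax$, and that the equivalence $A^{-1}\mathbf{v} \in \Z^{n} \iff \mathbf{v} \in A\Z^{n}$ used to identify $\ker\phi$ is stated cleanly. No deeper idea is needed beyond the index--determinant identity, and that identity is precisely where the Smith normal form viewpoint developed in Section 4 first becomes useful, foreshadowing the noninvertible generalization in \lemref{new_size_gmax}.
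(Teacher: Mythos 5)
Your proof is correct, but note that the paper itself supplies no argument for this lemma at all: it is stated with a pointer to \cite{ABS11}, so there is no internal proof to match against. Your route---realize $\gmax$ as the image of the homomorphism $\phi\colon \Z^{n}\to(\Q/\Z)^{n}$, $\mathbf{v}\mapsto A^{-1}\mathbf{v} \bmod \Z^{n}$, identify $\ker\phi=A\Z^{n}$ (the equivalence $A^{-1}\mathbf{v}\in\Z^{n}\iff\mathbf{v}\in A\Z^{n}$ is immediate from invertibility of $A$ over $\Q$), and conclude $\gmax\isom\Z^{n}/A\Z^{n}$ with $[\Z^{n}:A\Z^{n}]=|\det A|$ via Smith normal form---is the standard lattice-index argument and each step checks out, including the unimodularity bookkeeping $[\Z^{n}:A\Z^{n}]=[\Z^{n}:D\Z^{n}]=\prod_{i}d_{i}=|\det A|$. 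Your closing observation is apt and worth sharpening: this is exactly the mechanism the paper develops in Section 4, where \thmref{gmax_isom_thm} gives $\gsmith\isom\gmax$ via $\mathbf{x}\mapsto Q\mathbf{x}$ and the generator lemma gives $\gsmith\isom\Z/a_{1}\Z\times\dots\times\Z/a_{n}\Z$, whence $|\gmax|=\prod_{i}a_{i}$ (\lemref{new_size_gmax}); when $A$ is square, unimodularity of $P$ and $Q$ gives $\prod_{i}a_{i}=|\det S|=|\det A|$, recovering your conclusion. The one structural difference is that your map uses $A^{-1}$ and so is confined to the square case, whereas the paper's $\mathbf{x}\mapsto Q\mathbf{x}$ isomorphism handles rectangular exponent matrices; for the lemma as stated this costs nothing. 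A small phrasing slip, not a gap: ``since $A$ is an integer matrix it is rational-invertible'' conflates integrality with invertibility---what you need, and do state parenthetically afterward, is that the exponent matrix of an invertible polynomial has full rank, so $\det A\ne 0$.
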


\section{Characterization of $\gmax$ via Submatrices}

In order to state our new formulation of the maximal symmetry group, we first need a notion of submatrix for a given matrix $A$.

\begin{definition}
Given an admissible polynomial $W$ with $m \times n$ exponent matrix $A$. Any matrix $A_{i}$ comprised of $n$ rows taken from $A$ is called a \emph{submatrix} of $A$. We say that two submatrices $A_{i}$, $A_{j}$ of $A$ are \emph{equivalent} if they differ only by a permutation of rows.
\end{definition}

The following characterization of $\gmax$, attributed to Drew Johnson \cite[]{Webb}, is one way to compute $\gmax$. In the case where $m = n$, the characterization matches the method of computing $\gmax$ for an invertible polynomial, and thus extends the result of \lemref{invert_cols_gmax}.

\begin{theorem}\label{gmax_submatrix_thm}
Let $W$ be an admissible polynomial, and let the $A_{i}$ denote the invertible submatrices of the polynomial exponent matrix $A$ (up to equivalence). Then $\gmax = \bigcap_{A_{i}} \langle$cols of $A_{i}^{-1} \rangle$.
\end{theorem}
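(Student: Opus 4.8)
The plan is to carry out the entire argument inside the additive description $\gmax = \{\mathbf{g} \in (\Q/\Z)^{n} \mid A\mathbf{g} \in \Z^{m}\}$, and to first rewrite each factor of the intersection in the same integrality language. Concretely, for an invertible submatrix $A_{i}$ I would show
\[
\langle \text{cols of } A_{i}^{-1} \rangle = \{\mathbf{g} \in (\Q/\Z)^{n} \mid A_{i}\mathbf{g} \in \Z^{n}\}.
\]
This is essentially the content of \lemref{invert_cols_gmax} applied to $A_i$, and I would cite it while giving the short verification: writing $\mathbf{c}_{j}$ for the $j$th column of $A_{i}^{-1}$ we have $A_{i}\mathbf{c}_{j} = \mathbf{e}_{j} \in \Z^{n}$, so every generator, hence the whole generated group, satisfies the integrality condition; conversely if $A_{i}\mathbf{g} = \mathbf{v} \in \Z^{n}$ then $\mathbf{g} = A_{i}^{-1}\mathbf{v}$ is an integer combination of the columns of $A_{i}^{-1}$. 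I would note that the condition $A_{i}\mathbf{g} \in \Z^{n}$ is well defined on $(\Q/\Z)^{n}$ precisely because $A_{i}$ has integer entries.

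With both sides expressed as integrality conditions, one inclusion is immediate. If $A\mathbf{g} \in \Z^{m}$, then $\mathbf{r}\cdot\mathbf{g} \in \Z$ for every row $\mathbf{r}$ of $A$; since the rows of any invertible submatrix $A_{i}$ form a subset of the rows of $A$, we get $A_{i}\mathbf{g} \in \Z^{n}$, so $\mathbf{g}$ lies in every factor and $\gmax \subseteq \bigcap_{A_{i}} \langle \text{cols of } A_{i}^{-1} \rangle$.

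The reverse inclusion is where the real work lies, and I expect the covering step to be the main obstacle. Assuming $A_{i}\mathbf{g} \in \Z^{n}$ for every invertible submatrix $A_{i}$, I want $\mathbf{r}\cdot\mathbf{g} \in \Z$ for \emph{every} row $\mathbf{r}$ of $A$, and for that it suffices to prove the combinatorial fact that \emph{every row of $A$ belongs to at least one invertible $n \times n$ submatrix}. Here I would invoke admissibility: uniqueness of the quasihomogeneous weights, which solve $A\mathbf{q} = \mathbf{1}$, forces $A$ to have full column rank $n$, so the rows of $A$ span $\Q^{n}$; moreover no row is zero, since a zero row would be a constant monomial, impossible for a quasihomogeneous polynomial with positive weights. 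Fixing a row $\mathbf{r}$, the singleton $\{\mathbf{r}\}$ is linearly independent, and by the Steinitz exchange lemma it extends to a basis of $\Q^{n}$ drawn from the rows of $A$; that basis is exactly an invertible submatrix $A_{i}$ containing $\mathbf{r}$.

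Finally I would assemble the conclusion. For each row $\mathbf{r}$ of $A$, choose an invertible submatrix $A_{i}$ containing it; since $A_{i}\mathbf{g} \in \Z^{n}$, the coordinate of $A_{i}\mathbf{g}$ indexed by $\mathbf{r}$ is an integer, i.e. $\mathbf{r}\cdot\mathbf{g} \in \Z$. Ranging over all rows yields $A\mathbf{g} \in \Z^{m}$, so $\mathbf{g} \in \gmax$, which gives $\bigcap_{A_{i}} \langle \text{cols of } A_{i}^{-1} \rangle \subseteq \gmax$ and completes the proof. The only subtlety worth double-checking is the well-definedness of every integrality condition modulo $\Z^{n}$, which holds because $A$ and each $A_{i}$ are integer matrices.
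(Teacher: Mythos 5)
Your proposal is correct and follows essentially the same route as the paper: the easy inclusion comes from the rows of each $A_{i}$ being a subset of the rows of $A$, and the reverse inclusion hinges on the same key step, namely extending each row of $A$ to a basis of rows (your Steinitz exchange is the paper's Replacement Theorem), so that every row's integrality condition is enforced by some invertible submatrix. Your version merely streamlines the bookkeeping by working directly with the integrality description $\{\mathbf{g} \mid A_{i}\mathbf{g} \in \Z^{n}\}$ instead of the paper's auxiliary polynomials $W_{i}$ and sums $W_{1} + \dots + W_{r}$, and it adds a welcome justification (via uniqueness of the weights) of the rank-$n$ claim that the paper asserts without proof.
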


\begin{proof}
To avoid the trivial case, assume that $W$ is noninvertible. Let $W_{i}$ be a polynomial corresponding to the exponent matrix $A_{i}$ for all values of $i$. We will denote by $G_{W_{i}}^{\max}$ the group $\langle$cols of $A_{i}^{-1} \rangle$. Though some of the polynomials $W_{i}$ may not necessarily be nondegenerate, this notation still makes sense because the characterization of $\gmax$ in \lemref{invert_cols_gmax} depends only on $W$ being quasihomogeneous and having an invertible exponent matrix. 

We note that extra monomials in a polynomial $W$ become extra rows in an exponent matrix, and serve as further restrictions for elements allowed in $\gmax$. In our case, since $W$ = $W_{i}$ plus additional monomials, we have that $\gmax \subseteq  G_{W_{i}}^{\max}$ for each $i$. Therefore $\bigcap_{A_{i}} \langle$cols of $A_{i}^{-1} \rangle = \bigcap_{W_{i}} G_{W_{i}}^{\max} \supseteq \gmax$.

Now choose arbitrary nonequivalent submatrices $A_{i}$ and $A_{j}$. Let $W_{i}$ and $W_{j}$ be corresponding polynomials for $A_{i}$ and $A_{j}$. Since $A_{i}$ and $A_{j}$ are invertible $n \times n$ matrices, their corresponding polynomials will be polynomials of $n$ variables. We notice that $\langle$cols of $A_{i}^{-1} \rangle \cap \langle$cols of $A_{j}^{-1} \rangle = G_{W_{i}}^{\max} \cap G_{W_{j}}^{\max}$. 

Now we have the following:
\begin{align*}
G_{W_{i}}^{\max} \cap G_{W_{j}}^{\max} &= \{\mathbf{g} \in (\Q / \Z)^{n} \mid A_{i} \mathbf{g} \in \Z^{n} \} \cap \{\mathbf{g} \in (\Q / \Z)^{n} \mid A_{j}\mathbf{g} \in \Z^{n} \}\\
&=  \{\mathbf{g} \in (\Q / \Z)^{n} \mid A_{i}\mathbf{g} \in \Z^{n}\text{ and }A_{j}\mathbf{g} \in \Z^{n} \}\\
&\subseteq \{\mathbf{g} \in (\Q / \Z)^{n} \mid A_{i,j}' \mathbf{g} \in \Z^{m'} \}\\
&\text{  where } A_{i,j}' \text{ is the } m' \times n \text{ matrix containing all rows of } A_{i}, A_{j} \text{ (removing duplicates)}\\
&= G_{W_{i} + W_{j}}^{\max}.
\end{align*}
In general we see that $G_{W_{1}}^{\max} \cap G_{W_{2}}^{\max} \cap \dots \cap G_{W_{r}}^{\max} \subseteq G_{W_{1} + W_{2} + \dots + W_{r}}^{\max}$, which yields
\begin{align*}
\bigcap_{A_{i}} \langle \text{cols of }A_{i}^{-1} \rangle = \bigcap_{W_{i}} G_{W_{i}}^{\max} \subseteq G_{W_{1} + W_{2} + \dots + W_{r}}^{\max}.
\end{align*}

Because the exponent matrix $A$ has rank $n$, there are $n$ rows of $A$ that form a basis of $\R^{n}$. Let $B_{n} = \{\mathbf{a}_{1},\dots,\mathbf{a}_{n} \}$ be a basis of $\R^{n}$ comprised of $n$ row vectors from $A$. Let $\mathbf{a}_{n+1}, \dots, \mathbf{a}_{m}$ be the remaining rows of $A$. The sets $\{\mathbf{a}_{n+1} \}, \dots, \{\mathbf{a}_{m} \}$ are linearly independent sets in $\R^{n}$. By the Replacement Theorem of linear algebra, we can create new bases of $\R^{n}$, say $B_{n+1}, \dots, B_{m}$, such that $\mathbf{a}_{n+1} \in B_{n+1}, \dots, \mathbf{a}_{m} \in B_{m}$. Each row of $A$ can be represented in a basis of $\R^{n}$. 

Let $A_{n}, A_{n+1}, \dots, A_{m}$ be matrices with rows made up from the basis vectors in $B_{n}, B_{n+1}, \dots, B_{m}$, respectively. Then $A_{n}, A_{n+1}, \dots, A_{m}$ are all $n \times n$ invertible matrices, with rows originally taken from $A$. Let $W_{n}, W_{n+1}, \dots, W_{m}$ be the corresponding polynomials to these exponent matrices. We see that the polynomials $W$ and $W_{n} + W_{n+1} + \dots + W_{m}$ have all the same monomials. Therefore the polynomials $W$ and $W_{1} + W_{2} + \dots + W_{r}$ defined above also share all the same monomials. Since $\gmax$ is independent from the coefficients of $W$, we have that $G_{W_{1} + W_{2} + \dots + W_{r}}^{\max} = \gmax$. Therefore we see that $\bigcap_{A_{i}} \langle \text{cols of }A_{i}^{-1} \rangle \subseteq \gmax$. 

\end{proof}

We now have the following preliminary algorithm to compute $\gmax$ even when $W$ is noninvertible. The algorithm accepts the polynomial exponent matrix $A$ as input, and returns the entire set of group elements in canonical form that comprise $\gmax$.

\setcounter{algorithm}{-1}
\begin{algorithm}[H]
  \caption{Compute the generators of $\gmax$ via submatrices
    \label{alg:submatrix-gmax}}
  \begin{algorithmic}[1]
    \Statex
    \Function{GmaxSubmatrix}{$A$} \Comment{$A$ is the $m \times n$ exponent matrix}
    \ForEach{submatrix $A_{i}$ of $A$} \Comment{Up to equivalence}
      \If{$A_{i}$ is invertible}
      \State Compute the inverse $A_{i}^{-1}$ of $A_{i}$
      \State Generate the group $G_{i} \le (\Q / \Z)^{n}$ from the columns of $A_{i}^{-1}$ 
      \EndIf
      \EndFor
    \State \Return{the intersection of groups $\bigcap_{i} G_{i}$} \Comment{Use canonical representatives for group elements}
    \EndFunction
  \end{algorithmic}
\end{algorithm}

Before we perform our analysis, we will see how this algorithm works on a small example.

\begin{example}\label{ex-alg-1}
Consider the polynomial $W = x^{3} + y^{3} + x^{2}y$. One can quickly verify that $W$ is quasihomogeneous with weights $\left(\frac{1}{3}, \frac{1}{3} \right)$, and has its gradient vanishing only at the origin. Thus $W$ is admissible. We can represent its exponent matrix as
\begin{align*}
A = \begin{bmatrix}3 & 0 \\ 0 & 3 \\ 2 & 1 \end{bmatrix}.
\end{align*}
We have three different ways to choose two rows from $A$, so we form the submatrices
\begin{align*}
A_{1} = \begin{bmatrix}3 & 0 \\ 0 & 3 \end{bmatrix}, \; A_{2} = \begin{bmatrix}3 & 0 \\ 2 & 1 \end{bmatrix}, \; A_{3} = \begin{bmatrix} 0 & 3 \\ 2 & 1 \end{bmatrix}.
\end{align*}
Taking inverses of each matrix, we get 
\begin{align*}
A_{1}^{-1} = \begin{bmatrix}1/3 & 0 \\ 0 & 1/3 \end{bmatrix}, \; A_{2}^{-1} = \begin{bmatrix}1/3 & 0 \\ -2/3 & 1 \end{bmatrix}, \; A_{3} = \begin{bmatrix} -1/6 & 1/2 \\ 1/3 & 0 \end{bmatrix}.
\end{align*}
From the columns of $A_{1}^{-1}$ (writing column vectors as row vectors) we get the group generators $\left(\frac{1}{3}, 0 \right)$ and $\left(0, \frac{1}{3}\right)$. These produce the nine-element set
\[G_{1} = \left\{(0,0), (1/3, 0), (2/3, 0), (0,1/3),(1/3,1/3),(2/3,1/3),(0,2/3),(1/3,2/3),(2/3,2/3) \right\}. \]
Putting the columns of $A_{2}^{-1}$ in canonical form, we get the single generator $\left(\frac{1}{3}, \frac{1}{3} \right)$. The second column becomes (0,0) in canonical form, which contributes nothing. We get the three-element set
\[G_{2} = \{(0,0), (1/3,1/3), (2/3,2/3)\}.\]
Finally, from the columns of $A_{3}^{-1}$, we get generators $\left(\frac{5}{6}, \frac{1}{3} \right)$ and $\left(\frac{1}{2}, 0 \right)$. This yields a six-element set
\[G_{3} = \{(0,0), (1/2,0), (5/6,1/3), (1/3,1/3), (2/3,2/3), (1/6, 2/3)\}. \]
Taking the intersection of $G_{1}$, $G_{2}$, and $G_{3}$ yields our symmetry group
\[G_{W}^{\max} = \{(0,0), (1/3,1/3), (2/3,2/3)\}.\]
\end{example}

From our example, we note that there are a few slight improvements we can make to Algorithm \ref{alg:submatrix-gmax}. First, we can keep a running list of candidate group elements by computing the intersections of the $G_{i}$ inside the \texttt{for} loop. Second, we see that $\gmax = G_{2}$. In hindsight, computing $G_{3}$ was a little unnecessary since we ultimately got $G_{2} \subsetneq G_{3}$.

It turns out that the smallest group $\gmax$ can be is the one generated by the vector of quasihomogeneous weights. This follows by noting that $\mathbf{q} \in \gmax$ by the quasihomogeneous condition since $A \mathbf{q} = \mathbf{1}_{m} \in \Z^{m}$. Notice in \exampleref{ex-alg-1} that $\gmax$ is the group generated by the weights $\left( \frac{1}{3}, \frac{1}{3} \right)$, so this is a tight lower bound on the size of $\gmax$. If at any point in our computation our intersection of groups so far has size equal to the size of the group generated by $\mathbf{q}$, we can exit the loop and output our group elements. This gives the slightly better

\begin{algorithm}[H]
  \caption{Compute the generators of $\gmax$ via submatrices
    \label{alg:submatrix-gmax-improved}}
  \begin{algorithmic}[1]
    \Statex
    \Function{GmaxSubmatrix}{$A$} \Comment{$A$ is the $m \times n$ exponent matrix}
    \State Set $H := (\Q / \Z)^{n}$
    \ForEach{submatrix $A_{i}$ of $A$} \Comment{Up to equivalence}
      \If{$A_{i}$ is invertible}
      \State Compute the inverse $A_{i}^{-1}$ of $A_{i}$
      \State Generate the group $G_{i} \le (\Q / \Z)^{n}$ from the columns of $A_{i}^{-1}$ 
      \State Set $H := H \cap G_{i}$ \Comment{Use canonical representatives for group elements}
      \If{$|H| = |\langle \mathbf{q} \rangle |$}
          \State \Return{H}
      \EndIf
      \EndIf
      \EndFor
    \State \Return{H} 
    \EndFunction
  \end{algorithmic}
\end{algorithm}

Now for the analysis. By \thmref{gmax_submatrix_thm}, Algorithm \ref{alg:submatrix-gmax-improved} is correct---that is, it computes $\gmax$. However, it can be intractable even on polynomially sized input. We will consider a class of bad examples.

\begin{lemma}\label{example_poly}
For all even $n \ge 4$, $W_{n} = x_{1}^{2n} + \dots + x_{n}^{2n} + x_{1}^{n}x_{2}^{n} + \dots + x_{n}^{n}x_{1}^{n}$ is admissible.
\end{lemma}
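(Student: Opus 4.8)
The plan is to verify, one at a time, the three defining conditions of \defref{admissible} for $W_n$: quasihomogeneity with uniquely determined weights, the absence of any monomial of the form $x_i x_j$ with $i \neq j$, and nondegeneracy. The first two are direct. I would exhibit the weight vector $\mathbf{q} = \left(\tfrac{1}{2n}, \dots, \tfrac{1}{2n}\right)$ and check quasihomogeneity monomial by monomial: each Fermat term $x_i^{2n}$ has $\mathbf{q}$-weighted degree $2n \cdot \tfrac{1}{2n} = 1$, and each loop term $x_i^{n} x_{i+1}^{n}$ (indices read cyclically, so that the last term is $x_n^n x_1^n$) has weighted degree $n \cdot \tfrac{1}{2n} + n \cdot \tfrac{1}{2n} = 1$. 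To see the weights are \emph{unique}, I would note that the $n$ rows of the exponent matrix coming from the Fermat terms form the block $2n\,I_n$, so the $2n \times n$ exponent matrix $A$ has rank $n$; hence the system $A\mathbf{q} = \mathbf{1}$ pinning down the weights has a single solution, namely the $\mathbf{q}$ above.

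The no-cross-term condition is then immediate from the support: every monomial of $W_n$ is either $x_i^{2n}$ or $x_i^{n} x_{i+1}^{n}$, and since $n \ge 4$ every exponent that occurs is at least $n > 1$, so no monomial equals $x_i x_j$ for $i \neq j$. This leaves nondegeneracy, i.e. that the origin is an isolated critical point, as the one condition still to be secured.

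Nondegeneracy is the main obstacle, and it is where I would spend the real effort. The clean route is to use that this condition is controlled entirely by the monomial support of $W_n$, together with the observation that every variable $x_i$ already appears in the pure power $x_i^{2n}$. I would invoke the standard nondegeneracy criterion for quasihomogeneous singularities underlying the atomic-type classification cited for \cite{KS}: a quasihomogeneous polynomial whose support contains a pure power of each variable has an isolated critical point at the origin. Applying this to $W_n$ yields the required isolated singularity, and combining it with the two conditions above lets me conclude via \defref{admissible} that $W_n$ is admissible for every even $n \ge 4$. The delicate point to handle carefully here is the loop structure: the monomials $x_i^{n} x_{i+1}^{n}$ couple the partial derivatives $\partial_k W_n$ around the full cycle $1,2,\dots,n,1$, so rather than attempting to solve the coupled gradient system $\partial_1 W_n = \dots = \partial_n W_n = 0$ directly, I would reduce the isolated-singularity claim to the pure-power covering criterion, which sidesteps the cyclic coupling entirely.
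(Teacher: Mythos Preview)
Your treatment of quasihomogeneity, uniqueness of the weights, and the absence of cross terms is fine and in line with the paper. The genuine gap is in the nondegeneracy step. The criterion you invoke---that a quasihomogeneous polynomial whose support contains a pure power of every variable automatically has an isolated critical point at the origin---is simply false, and nothing in \cite{KS} asserts it. A clean counterexample is $W = x^{4} - 2x^{2}y^{2} + y^{4} = (x^{2}-y^{2})^{2}$: it is quasihomogeneous with weights $(\tfrac14,\tfrac14)$, its support contains the pure powers $x^{4}$ and $y^{4}$, yet its critical locus is the union of the lines $y=\pm x$, not an isolated point. More generally, nondegeneracy depends on the coefficients, not only on the monomial support, so no ``pure-power covering'' shortcut of the type you describe can exist. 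A further warning sign is that your argument never uses the hypothesis that $n$ is even; if the criterion were valid it would prove the lemma for odd $n$ as well.

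The paper does not appeal to any such principle. It computes the gradient directly, factoring the $i$th partial as $n\,x_{i}^{\,n-1}\bigl(2x_{i}^{\,n} + x_{i-1}^{\,n} + x_{i+1}^{\,n}\bigr)$ with cyclic indices, and then argues that the bracketed factor cannot vanish when $x_{i}\neq 0$ because for even $n$ each term $x_{j}^{\,n}$ is nonnegative and $2x_{i}^{\,n}>0$. That is precisely the ``coupled gradient system'' you elected to sidestep; here the evenness of $n$ is what makes the cyclic coupling tractable. To repair your argument you would need to engage with those equations rather than cite a support-only criterion. (One may note, as an aside, that the paper's positivity reasoning is phrased for real inputs; over $\C$ the analysis is more delicate, but that is a separate issue from the gap in your proposal.)
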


\begin{proof}
Here $W_{n}$ is the sum of $n$ Fermat monomials with exponent $2n$, followed by $n$ extra monomials. Note that $W_{n}$ has weight system $\mathbf{q} = (\frac{1}{2n}, \dots, \frac{1}{2n})$, since any monomial of the form $x_{i}^{n} x_{j}^{n}$ (possibly with $i = j$) satisfies $n \cdot \frac{1}{2n} + n \cdot \frac{1}{2n} = 1$. To show that $W_{n}$ is nondegenerate, we compute
\begin{align*}
\nabla W_{n} &= \langle 2n x_{1}^{2n - 1} + nx_{1}^{2n-1}x_{2}^{n} + nx_{n}^{n}x_{1}^{n-1}, \dots, 2n x_{n}^{2n - 1} + nx_{n}^{2n-1}x_{1}^{n} + nx_{n-1}^{n}x_{n}^{n-1}   \rangle \\
&= \langle nx_{1}^{n-1}(2x_{1}^{n} + x_{2}^{n} + x_{n}^{n}), \dots,n x_{n}^{n-1}(2x_{n}^{n} + x_{1}^{n} + x_{n-1}^{n})  \rangle.
\end{align*}
$W_{n}$ has a critical point when $x_{1} = \dots = x_{n} = 0$. Notice that if $x_{i} = 0$, then $(\nabla W_{n})_{i} = 0$. And if $x_{i} \ne 0$, then in order for the $i$th coordinate of the gradient to vanish we require the $2x_{i}^{n} + x_{j}^{n} + x_{k}^{n}$ term to vanish. But since $n$ is even, each variable raised to the $n$th power is nonnegative. And since we assumed $x_{i} \ne 0$, then $2x_{i}^{n} + x_{j}^{n} + x_{k}^{n}$ has to be positive. Therefore $\nabla W_{n}$ only vanishes at the origin. Hence $W_{n}$ is nondegenerate.
\end{proof}

Notice that for $W_{n}$ the largest entry of $A$ is $2n$ (thus $\|A\| = 2n$), and the number of monomials of $W_{n}$ is $m = 2n$. Therefore the input to Algorithm \ref{alg:submatrix-gmax-improved} is of polynomial size to compute $G_{W_{n}}^{\max}$. However, the computation time quickly gets large.

\begin{theorem}\label{submatrix_alg_analysis}
Algorithm \ref{alg:submatrix-gmax-improved} requires exponential time to compute $G_{W_{n}}^{\max}$ for all even $n \ge 4$. 
\end{theorem}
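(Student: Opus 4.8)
The plan is to show that Algorithm \ref{alg:submatrix-gmax-improved} cannot terminate early via the $|H| = |\langle \mathbf{q}\rangle|$ check, and that the number of invertible submatrices it must process before reaching that point is exponential in $n$. The key observation is that for $W_n$ the weight vector is $\mathbf{q} = (\frac{1}{2n},\dots,\frac{1}{2n})$, so $\langle \mathbf{q}\rangle$ has order $2n$, whereas $\gmax$ is genuinely larger; if I can show $|\gmax| \gg 2n$, then the early-exit condition $|H| = |\langle\mathbf{q}\rangle|$ never fires, forcing the loop to run over \emph{all} invertible submatrices (up to equivalence) before the final return. Since the loop iterates over subsets of $n$ rows chosen from the $m = 2n$ rows of $A$, there are up to $\binom{2n}{n}$ submatrices to examine, and I would argue that a constant fraction of these are invertible, giving $\binom{2n}{n} \ge 2^{n}$ iterations by the same central-binomial estimate used in the earlier lemma.

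First I would analyze the exponent matrix $A$ of $W_n$ explicitly. Its first $n$ rows form $2n \cdot I_n$ (the Fermat block), and the remaining $n$ rows encode the cross terms $x_i^n x_{i+1}^n$, i.e.\ rows with two entries equal to $n$ in a cyclic pattern. I would compute $\gmax = \{\mathbf{g} \in (\Q/\Z)^n : A\mathbf{g} \in \Z^m\}$ directly from the divisibility conditions: the Fermat rows force each $g_i \in \frac{1}{2n}\Z$, so write $g_i = \frac{k_i}{2n}$ with $k_i \in \Z/2n\Z$, and the cross-term rows impose $n(g_i + g_{i+1}) \in \Z$, i.e.\ $k_i + k_{i+1} \equiv 0 \pmod 2$ cyclically. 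For even $n$ this system has a large solution set — every $k_i$ of a fixed parity works, plus mixed parities in alternating blocks — so I expect $|\gmax|$ to be on the order of $(2n)^{n}/2^{n-1}$ or similar, in any case exponentially larger than $2n = |\langle\mathbf{q}\rangle|$. Establishing this strict inequality $|\gmax| > |\langle \mathbf{q}\rangle|$ is what guarantees the early-exit check never succeeds.

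The second ingredient is a lower bound on the number of invertible submatrices. I would show that any choice of $n$ rows that includes all $n$ Fermat rows is automatically invertible (it equals $2n\cdot I_n$ with one or more rows swapped for cross rows — more carefully, I would exhibit an explicit large family of invertible $n$-row selections), and count them. A clean approach is to note that selecting the $n$ Fermat rows alone already gives the invertible submatrix $A_1 = 2nI_n$, and then to exhibit $2^n$ further invertible submatrices by swapping, for each index $i$, either the $i$-th Fermat row or a carefully chosen cross row, verifying invertibility via a determinant computation over the cyclic structure. The combinatorial count then yields at least $2^{n-1}$ distinct nonequivalent invertible submatrices, each costing at least constant work (an inverse computation and an intersection), so the total running time is $\Omega(2^{n-1})$.

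The main obstacle I anticipate is the submatrix count: I must guarantee not merely that $\binom{2n}{n}$ submatrices \emph{exist}, but that enough of them are invertible \emph{and} that the algorithm genuinely processes them — i.e.\ that $H$ does not collapse to $\langle\mathbf{q}\rangle$ prematurely through the accumulating intersections. The cleanest way around this is to decouple the two facts: prove $|\gmax| > 2n$ once and for all (so the early exit is \emph{never} triggered regardless of iteration order), and separately prove that the count of invertible submatrices the loop must enumerate is at least $2^{n-1}$. Verifying invertibility of the chosen family — computing determinants of $n \times n$ matrices built from the Fermat and cyclic cross rows — is the most delicate calculation, and I would handle it by exploiting the near-diagonal structure (cofactor expansion along the Fermat rows) rather than attempting a general formula.
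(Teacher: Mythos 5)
Your proposal is correct in outline and shares the paper's two-part skeleton---show the early-exit test $|H| = |\langle\mathbf{q}\rangle|$ never fires, then count exponentially many loop iterations---but it executes both parts differently. For the first part, the paper gets by with a single witness element: $(0,\dots,\frac{1}{n},\dots,0) \in G_{W_{n}}^{\max}$ (since $2n \cdot \frac{1}{n}$ and $n \cdot \frac{1}{n}$ are integers), while every element of $\langle\mathbf{q}\rangle$ has all phases equal; this gives $\langle\mathbf{q}\rangle \lneq G_{W_{n}}^{\max}$ with no group computation. Your full determination of $\gmax$ is heavier but also valid, and in fact yields the exact order $2n^{n}$; note one slip, though: the cyclic constraints $k_{i} + k_{i+1} \equiv 0 \pmod 2$ force \emph{all} $k_{i}$ to share a single parity, so there are no ``mixed parities in alternating blocks''---harmless, since the same-parity solutions alone already give $2n^{n} > 2n$. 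For the second part, the paper simply observes that once the early exit is dead, the \texttt{for} loop enumerates all $\binom{2n}{n} \ge \frac{2^{2n}}{2n+1}$ row-subsets (all $2n$ rows of $A$ are distinct, so equivalence classes coincide with subsets), each costing at least one step; no count of \emph{invertible} submatrices is needed under this cost model. Your explicit invertible family addresses the stricter reading in which non-invertible submatrices are enumerated for free, which is a genuine strengthening, but be careful with the verification you deferred: in the family obtained by replacing the $i$-th Fermat row $2n\,e_{i}$ with the cross row $n(e_{i} + e_{i+1})$ for each $i$ in a set $T$, the choice $T = \{1,\dots,n\}$ is \emph{singular} precisely for even $n$ (the underlying circulant $I + P$ has determinant $1 - (-1)^{n} = 0$), so your count must exclude it, leaving $2^{n} - 1 \ge 2^{n-1}$ invertible submatrices---which still suffices for the exponential lower bound.
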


\begin{proof}
Here the group generated by the weights vector is
\[\langle \mathbf{q} \rangle = \left\langle \left(1/2n, \dots, 1/2n \right)\right\rangle. \]
If $G_{W_{n}}^{\max} = \langle \mathbf{q} \rangle$, then every group element in $G_{W_{n}}^{\max}$ would have the canonical form $\left(\frac{c}{2n}, \dots, \frac{c}{2n} \right)$ for some $c \in \{0, \dots, 2n -1\}$. But we notice that vectors of the form $\left(0, \dots, \frac{1}{n}, \dots, 0\right) \in G_{W_{n}}^{\max}$ since both $2n \cdot \frac{1}{n}$ (coming from the monomials $x_{i}^{2n}$) and $n \cdot \frac{1}{n}$ (coming from the monomials $x_{i}^{n} x_{j}^{n}$) are integers. This shows that $\langle \mathbf{q} \rangle \lneq G_{W_{n}}^{\max}$, which means that Algorithm \ref{alg:submatrix-gmax-improved} does not exit early.

Hence the \texttt{for} loop on line 3 of the algorithm iterates over all the $\left( \begin{matrix} m \\ n \end{matrix} \right)$ submatrices of $A$. Plugging in $m = 2n$, we see that as $n$ gets large the inequality
\begin{align*}
\left( \begin{matrix} 2n \\ n \end{matrix} \right) \ge \frac{2^{2n}}{2n + 1} 
\end{align*}
holds via Stirling's formula. So Algorithm \ref{alg:submatrix-gmax-improved} takes at least $\frac{2^{2n}}{2n + 1}$ steps, which is exponential in the size of the input matrix $A$.

\end{proof}

\section{Characterization of $\gmax$ via the Smith Normal Form}

An alternative characterization for $\gmax$, which does not extend the result of \lemref{invert_cols_gmax}, is obtained through the Smith normal form of $A$.

\begin{definition}
The \emph{Smith normal form} of an $m \times n$ matrix $M$ over $\Z$ of rank $n$ is a matrix $S$ over $\Z$ that satisfies the following properties: there exist matrices $P$ and $Q$ such that $S = P M Q$, where $P$ is an invertible $m \times m$ matrix over $\Z$, $Q$ is an invertible $n \times n$ matrix over $\Z$, and $S$ is an $m \times n$ matrix of the form 
\begin{align*}
\begin{bmatrix} 
a_{1} & 0 & \dots & 0 \\
0 & a_{2} & \dots & 0 \\
\vdots & \vdots & \ddots & \vdots \\
0 & 0 & \dots & a_{n} \\
0 & 0 & \dots & 0 \\
\vdots & \vdots & \ddots & \vdots \\
0 & 0 & \dots & 0
\end{bmatrix}.
\end{align*}
The $a_{i}$ are sometimes referred to as \emph{invariant factors}. Here each $a_{i} \in \N \setminus \{0\}$, and for all $i \in \{2, \dots, n \}$ we have $a_{i-1} \mid a_{i}$. Because $M$ has rank $n$, none of the $a_{i} = 0$.
\end{definition}

Given an admissible polynomial $W$, we know that its $m \times n$ exponent matrix $A$ has full rank with integer-valued entries. Therefore, we can have the following

\begin{definition}
Define a new set $\gsmith$ as 
\begin{align*}
\gsmith = \{\mathbf{g} \in (\Q/\Z)^{n} \mid S\mathbf{g} \in \Z^{m} \},
\end{align*}
where $S$ is the Smith normal form of the exponent matrix of $W$. 
\end{definition}

We immediately see that $\gsmith$ is itself a group, and is a subgroup of $(\Q / \Z)^{n}$.

\begin{lemma}
$G_{W}^{\text{Smith}}$ is a subgroup of $(\Q / \Z)^{n}$. 
\end{lemma}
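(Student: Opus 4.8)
The plan is to exhibit $\gsmith$ as the kernel of a group homomorphism, since the kernel of any homomorphism is automatically a subgroup. Concretely, I would introduce the map $\phi \colon (\Q/\Z)^{n} \to (\Q/\Z)^{m}$ defined by $\phi(\mathbf{g}) = S\mathbf{g} \bmod \Z^{m}$. Here one must be slightly careful about what the expression $S\mathbf{g}$ means: the vector $\mathbf{g}$ represents a coset in $(\Q/\Z)^{n}$, so I read $S\mathbf{g}$ as an element of $(\Q/\Z)^{m}$, and the defining condition ``$S\mathbf{g} \in \Z^{m}$'' is precisely the statement that this element is trivial, i.e. $\phi(\mathbf{g}) = \mathbf{0}$. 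Thus $\gsmith = \ker \phi$, and it remains only to check that $\phi$ is a well-defined homomorphism.

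The one genuine point to verify --- and the only place where any real content enters --- is that $\phi$ is well-defined, that is, independent of the chosen representative of $\mathbf{g}$. This is exactly where I would use that $S$ has integer entries. If $\mathbf{g}$ and $\mathbf{g}'$ are two representatives of the same coset, then $\mathbf{g} - \mathbf{g}' \in \Z^{n}$, so $S(\mathbf{g} - \mathbf{g}') \in S\,\Z^{n} \subseteq \Z^{m}$ because each entry of $S$ is an integer; hence $S\mathbf{g} \equiv S\mathbf{g}' \bmod \Z^{m}$ and $\phi$ is well-defined. The homomorphism property is then immediate from linearity of matrix multiplication: $\phi(\mathbf{g} + \mathbf{h}) = S(\mathbf{g} + \mathbf{h}) = S\mathbf{g} + S\mathbf{h} = \phi(\mathbf{g}) + \phi(\mathbf{h})$, all read mod $\Z^{m}$.

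With $\phi$ shown to be a well-defined group homomorphism, I conclude that $\gsmith = \ker \phi$ is a subgroup of $(\Q/\Z)^{n}$, which proves the lemma. Equivalently, one could skip the homomorphism language and verify the subgroup axioms directly: the zero vector clearly satisfies $S\mathbf{0} = \mathbf{0} \in \Z^{m}$, and if $S\mathbf{g}, S\mathbf{h} \in \Z^{m}$ then $S(\mathbf{g} - \mathbf{h}) = S\mathbf{g} - S\mathbf{h} \in \Z^{m}$, giving closure under subtraction. I do not expect any real obstacle here; the statement is routine, and the sole subtlety worth flagging is the well-definedness check above, which is what forces the hypothesis that $S$ --- and indeed the original exponent matrix --- be integer-valued.
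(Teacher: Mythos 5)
Your proof is correct, but it takes a genuinely different route from the paper. The paper verifies the Subgroup Criterion directly: it notes $\mathbf{0}_{n} \in \gsmith$ (so the set is nonempty) and that $\mathbf{x}, \mathbf{y} \in \gsmith$ gives $S(\mathbf{x} - \mathbf{y}) = S\mathbf{x} - S\mathbf{y} \in \Z^{m}$ --- precisely the two-line check you relegate to your closing ``equivalently'' remark. Your primary argument instead realizes $\gsmith$ as $\ker \phi$ for $\phi \colon (\Q/\Z)^{n} \to (\Q/\Z)^{m}$, $\mathbf{g} \mapsto S\mathbf{g} \bmod \Z^{m}$. What your version buys is the explicit well-definedness check: the condition ``$S\mathbf{g} \in \Z^{m}$'' must be independent of the chosen representative of the coset $\mathbf{g}$, and this holds exactly because $S$ is integral, so $S\,\Z^{n} \subseteq \Z^{m}$. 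The paper uses this fact tacitly --- its definition of $\gsmith$ and its subtraction computation with $\mathbf{b}_{1}, \mathbf{b}_{2} \in \Z^{m}$ would both fail for a non-integer matrix --- and an analogous representative-independence argument only appears later, in the proof of \thmref{gmax_isom_thm} for the map $\mathbf{x} \mapsto Q\mathbf{x}$. What the paper's direct check buys is brevity and no need to introduce the quotient target $(\Q/\Z)^{m}$. Both arguments hinge on the single substantive fact that the Smith normal form has integer entries, so your proposal is a sound, slightly more structural packaging of the same content.
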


\begin{proof}
$\mathbf{0}_{n} \in \gsmith$, since $S\mathbf{0}_{n} = \mathbf{0}_{m} \in \Z^{m}$. Therefore $\gsmith$ is not empty. Now for any $\mathbf{x}, \mathbf{y} \in \gsmith$,
\begin{align*}
S(\mathbf{x} - \mathbf{y}) = S\mathbf{x} - S\mathbf{y} &= \mathbf{b}_{1} - \mathbf{b}_{2}, \text{ where } \mathbf{b}_{1},\mathbf{b}_{2} \in \Z^{m},\\
&= \mathbf{b} \in \Z^{m}.
\end{align*}
Therefore, by the Subgroup Criterion, $\gsmith \le (\Q/\Z)^{n}$. 
\end{proof}

We also observe that $\gsmith$ is isomorphic to our original group $\gmax$. 

\begin{theorem}\label{gmax_isom_thm}
$\gsmith\isom \gmax$ as finite Abelian groups.
\end{theorem}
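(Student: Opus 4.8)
The plan is to exhibit an explicit isomorphism built directly from the unimodular change-of-basis matrices $P$ and $Q$ in the factorization $S = PAQ$. The central observation is that $P$ and $Q$ are invertible over $\Z$, so $\det P = \pm 1$ and $\det Q = \pm 1$, which forces their inverses $P^{-1}$ and $Q^{-1}$ to have integer entries as well. Consequently, multiplication by $Q^{-1}$ (and by $Q$) carries the lattice $\Z^{n}$ into itself and therefore descends to a well-defined group automorphism of $(\Q/\Z)^{n}$. This is the one place where anything delicate happens, and it is the step I would treat most carefully: one must check that $Q^{-1}$ respects equivalence modulo $\Z^{n}$, which holds precisely because $Q^{-1}$ is integral, so that if $\mathbf{g} - \mathbf{g}' \in \Z^{n}$ then $Q^{-1}(\mathbf{g} - \mathbf{g}') \in \Z^{n}$.

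With this in hand I would define $\phi \colon \gmax \to \gsmith$ by $\phi(\mathbf{g}) = Q^{-1}\mathbf{g}$ and propose $\psi(\mathbf{h}) = Q\mathbf{h}$ as its inverse. To see that $\phi$ lands in $\gsmith$, take $\mathbf{g} \in \gmax$, so $A\mathbf{g} \in \Z^{m}$, and compute
\begin{align*}
S(Q^{-1}\mathbf{g}) = (PAQ)Q^{-1}\mathbf{g} = P(A\mathbf{g}) \in \Z^{m},
\end{align*}
since $P$ is integral; hence $Q^{-1}\mathbf{g} \in \gsmith$. Symmetrically, for $\mathbf{h} \in \gsmith$ one has $S\mathbf{h} \in \Z^{m}$, and using $A = P^{-1}SQ^{-1}$,
\begin{align*}
A(Q\mathbf{h}) = (P^{-1}SQ^{-1})Q\mathbf{h} = P^{-1}(S\mathbf{h}) \in \Z^{m},
\end{align*}
because $P^{-1}$ is integral; hence $Q\mathbf{h} \in \gmax$. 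Thus $\phi$ and $\psi$ are genuinely inverse to one another as set maps between the two subgroups.

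Finally I would observe that $\phi$ and $\psi$ are group homomorphisms, since multiplication by a fixed integer matrix is additive, and that $\psi \circ \phi$ and $\phi \circ \psi$ are both the identity, so $\phi$ is a bijective homomorphism and hence an isomorphism. Both groups are Abelian as subgroups of $(\Q/\Z)^{n}$, and $\gmax$ is finite by the result recalled just after \defref{gmax}; the isomorphism then transports finiteness to $\gsmith$, completing the claim. The only genuine subtlety, as noted, is the well-definedness of the map modulo $\Z^{n}$, which hinges entirely on the integrality of $Q^{\pm 1}$ (equivalently, on $Q$ being unimodular); everything after that is routine bookkeeping with the identity $S = PAQ$.
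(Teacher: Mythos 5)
Your proof is correct and follows essentially the same route as the paper's: the paper defines $\phi : \gsmith \to \gmax$ by $\mathbf{x} \mapsto Q\mathbf{x}$ with inverse $\mathbf{x} \mapsto Q^{-1}\mathbf{x}$, which is exactly your pair of maps with the two directions swapped, verified by the same identities $S(Q^{-1}\mathbf{g}) = PA\mathbf{g}$ and $A(Q\mathbf{h}) = P^{-1}S\mathbf{h}$. Your emphasis on the unimodularity of $Q$ (integrality of $Q^{\pm 1}$) as the crux of well-definedness modulo $\Z^{n}$ corresponds precisely to the paper's check that $Q\mathbf{b}$ and $Q^{-1}\mathbf{b}$ lie in $\Z^{n}$.
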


\begin{proof}
Let $S = PAQ$ be the Smith normal form of $A$. We then have $A = P^{-1} S Q^{-1}$. Define a map $\phi : \gsmith \rightarrow \gmax$ by the rule $\mathbf{x} \mapsto Q\mathbf{x}$. Notice that for any $\mathbf{x} \in \gsmith$,
\begin{align*}
A\phi(\mathbf{x}) = AQ\mathbf{x} = (P^{-1}SQ^{-1})Q\mathbf{x} = P^{-1}S\mathbf{x}.
\end{align*}
Since $\mathbf{x} \in \gsmith$ then $S\mathbf{x} \in \Z^{m}$, which implies $P^{-1}S\mathbf{x} \in \Z^{m}$. This shows that $\phi(\mathbf{x}) \in \gmax$, so $\phi$ does map elements of $\gsmith$ into $\gmax$. 

For any $\mathbf{x}_{1}, \mathbf{x}_{2} \in \gsmith$, we write $\mathbf{x}_{1} \equiv \mathbf{x}_{2}$ if and only if there is some integer vector $\mathbf{b} \in \Z^{n}$ such that $\mathbf{x}_{1} = \mathbf{x}_{2} + \mathbf{b}$. Thus
\begin{align*}
 \mathbf{x}_{1} \equiv \mathbf{x}_{2} &\text{ if and only if } \mathbf{x}_{1} = \mathbf{x}_{2} + \mathbf{b}, \\
 &\text{ if and only if } Q \mathbf{x}_{1} = Q \mathbf{x}_{2} + Q\mathbf{b}, \\
 &\text{ if and only if } \phi(\mathbf{x}_{1}) = \phi(\mathbf{x}_{2}) + Q\mathbf{b}.
\end{align*}
Because $Q\mathbf{b} \in \Z^{n}$, we have that $ \phi(\mathbf{x}_{1}) \equiv \phi(\mathbf{x}_{2})$. Hence $\phi$ is well-defined. 


To show that $\phi$ is a bijection, we define an inverse map $\phi^{-1} : \gmax \rightarrow \gsmith$ by the rule $\mathbf{x} \mapsto Q^{-1}\mathbf{x}$. Then
\begin{align*}
\phi(\phi^{-1}(\mathbf{x})) &= \phi(Q^{-1}\mathbf{x}) = QQ^{-1}\mathbf{x} = \mathbf{x}, \\
\phi^{-1}(\phi(\mathbf{x})) &= \phi^{-1}(Q\mathbf{x}) = Q^{-1}Q\mathbf{x} = \mathbf{x}.
\end{align*}
We see that for any $\mathbf{x} \in \gmax$,
\begin{align*}
 S\phi^{-1}(\mathbf{x}) &= SQ^{-1}\mathbf{x} = (PAQ)Q^{-1}\mathbf{x} = PA\mathbf{x}.
\end{align*} 
Since $\mathbf{x} \in \gmax$ then $A\mathbf{x} \in \Z^{m}$, which implies $PA\mathbf{x} \in \Z^{m}$. This shows that $\phi^{-1}(\mathbf{x}) \in \gmax$, so $\phi^{-1}$ does map elements of $\gmax$ into $\gsmith$.

Also, for any $\mathbf{x}_{1}, \mathbf{x}_{2} \in \gmax$, we write $\mathbf{x}_{1} \equiv \mathbf{x}_{2}$ if and only if there is some integer vector $\mathbf{b} \in \Z^{n}$ such that $\mathbf{x}_{1} = \mathbf{x}_{2} + \mathbf{b}$. Thus,
\begin{align*}
 \mathbf{x}_{1} \equiv \mathbf{x}_{2} &\text{ if and only if } \mathbf{x}_{1} = \mathbf{x}_{2} + \mathbf{b}, \\
 &\text{ if and only if } Q^{-1} \mathbf{x}_{1} = Q^{-1} \mathbf{x}_{2} + Q^{-1}\mathbf{b}, \\
 &\text{ if and only if } \phi^{-1}(\mathbf{x}_{1}) = \phi^{-1}(\mathbf{x}_{2}) + Q^{-1}\mathbf{b}.
\end{align*}
Because $Q^{-1}\mathbf{b} \in \Z^{n}$, we have that $ \phi^{-1}(\mathbf{x}_{1}) \equiv \phi^{-1}(\mathbf{x}_{2})$. Hence $\phi^{-1}$ is well-defined. Since $\phi$ has a well-defined inverse map, it is a bijection.

Finally, to show that $\phi$ is a homomorphism, let $\mathbf{x}_{1}, \mathbf{x}_{2} \in \gsmith$. Then,
\begin{align*}
\phi(\mathbf{x}_{1} + \mathbf{x}_{2}) = Q(\mathbf{x}_{1} + \mathbf{x}_{2}) =  Q\mathbf{x}_{1} + Q\mathbf{x}_{2} = \phi(\mathbf{x}_{1}) + \phi(\mathbf{x}_{2}).
\end{align*}
Thus $\phi$ is an isomorphism, and we conclude that $\gsmith \isom \gmax$.
\end{proof}

We can now write down a generating set for $\gsmith$.

\begin{lemma}
$\gsmith = \langle (\frac{1}{a_{1}}, 0, \dots, 0), (0, \frac{1}{a_{2}}, 0, \dots, 0), \dots, (0, \dots, 0, \frac{1}{a_{n}}) \rangle$ .
\end{lemma}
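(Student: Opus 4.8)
The plan is to exploit the diagonal structure of $S$ to decouple the defining condition $S\mathbf{g} \in \Z^{m}$ into $n$ independent coordinate conditions. First I would take an arbitrary $\mathbf{g} = (g_{1}, \dots, g_{n}) \in (\Q/\Z)^{n}$ and compute $S\mathbf{g}$ directly. Since $S$ has diagonal entries $a_{1}, \dots, a_{n}$ in its top $n$ rows and zero rows thereafter, the product $S\mathbf{g}$ is the vector whose $i$-th entry is $a_{i} g_{i}$ for $1 \le i \le n$ and whose remaining $m - n$ entries are all $0$. The zero rows lie in $\Z$ automatically, so membership $S\mathbf{g} \in \Z^{m}$ is equivalent to the $n$ separate scalar conditions $a_{i} g_{i} \in \Z$ for each $i \in \{1, \dots, n\}$.

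Next I would translate each scalar condition into a statement about cyclic subgroups of $\Q/\Z$. Working modulo $\Z$, the condition $a_{i} g_{i} \in \Z$ holds precisely when $g_{i}$ is an integer multiple of $\frac{1}{a_{i}}$, that is, when $g_{i} \in \langle \frac{1}{a_{i}} \rangle \le \Q/\Z$, the cyclic group of order $a_{i}$. Hence $\gsmith$ is exactly the set of tuples $(g_{1}, \dots, g_{n})$ with $g_{i} \in \langle \frac{1}{a_{i}} \rangle$ for every $i$, i.e. the internal direct sum $\langle \frac{1}{a_{1}} \rangle \times \dots \times \langle \frac{1}{a_{n}} \rangle$ sitting inside $(\Q/\Z)^{n}$.

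To finish, I would verify the claimed equality by two inclusions, writing $\mathbf{v}_{i} = (0, \dots, 0, \frac{1}{a_{i}}, 0, \dots, 0)$ for the proposed generators. For one inclusion, each $\mathbf{v}_{i}$ satisfies $S\mathbf{v}_{i} \in \Z^{m}$, since its only nonzero entry is $a_{i} \cdot \frac{1}{a_{i}} = 1$; thus each generator lies in $\gsmith$, and so does the subgroup they generate. For the reverse inclusion, given $\mathbf{g} \in \gsmith$, the coordinate conditions let me write each $g_{i} = \frac{c_{i}}{a_{i}}$ with $c_{i} \in \Z$, whence $\mathbf{g} = \sum_{i=1}^{n} c_{i} \mathbf{v}_{i}$ belongs to the subgroup generated by the $\mathbf{v}_{i}$. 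The statement is essentially routine; the only point demanding care is the modular arithmetic in the second paragraph, namely the equivalence between $a_{i} g_{i} \in \Z$ for $g_{i} \in \Q/\Z$ and $g_{i}$ being an integer multiple of $\frac{1}{a_{i}}$, so that is where I would be most explicit.
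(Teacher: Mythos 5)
Your proposal is correct and follows essentially the same route as the paper: both exploit the diagonal form of $S$ to reduce membership to the coordinate conditions $a_{i}g_{i} \in \Z$, verify the generators $\mathbf{v}_{i}$ lie in $\gsmith$, and write an arbitrary element as $\sum_{i} c_{i}\mathbf{v}_{i}$. If anything, your version is slightly more careful than the paper's, which asserts the reverse inclusion in one line where you make the equivalence $a_{i}g_{i} \in \Z \iff g_{i} \in \langle \frac{1}{a_{i}} \rangle$ in $\Q/\Z$ explicit.
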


\begin{proof}
Consider the $i$th generator $\mathbf{x}_{i} = (0, \dots, \frac{1}{a_{i}}, \dots, 0)$. Then 
\begin{align*}
S\mathbf{x}_{i} = (0, \dots, 1, \dots, 0, \dots, 0) \in \Z^{m}.
\end{align*}
Any integer linear combination of generators also produces an integer vector, because
\begin{align*}
S(c_{1}\mathbf{x}_{1} + \dots + c_{n}\mathbf{x}_{n}) &= c_{1}S\mathbf{x}_{1} + \dots + c_{n}S\mathbf{x}_{n} \\
&= (c_{1}, \dots, c_{n}, 0, \dots, 0) \in \Z^{m}.
\end{align*}
Thus any $\mathbf{x} \in \gsmith = (\frac{c_{1}}{a_{1}}, \dots, \frac{c_{n}}{a_{n}}) = c_{1}\mathbf{x}_{1} + \dots + c_{n}\mathbf{x}_{n}$.
\end{proof}

It may be of independent interest to note that $\gsmith$ immediately decomposes into the canonical form
\begin{align*}
\gsmith \isom \Z / a_{1} \Z \times \dots \times \Z / a_{n} \Z
\end{align*}
guaranteed by the Fundamental Theorem of Finitely Generated Abelian Groups. By the isomorphism of \thmref{gmax_isom_thm} we immediately get the same decomposition of invariant factors for $\gmax$. We can now generalize \lemref{size_gmax} with
\begin{lemma}\label{new_size_gmax}
Let $W$ be an admissible polynomial. If $D = \text{diag}[a_{i}]_{i = 1}^{n}$ (the first $n$ rows of the Smith normal form of the exponent matrix of $W$), then $|\gmax| = \det(D) =  \prod_{i = 1}^{n} a_{i}$.
\end{lemma}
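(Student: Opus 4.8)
The plan is to reduce the statement entirely to facts already in hand, since both the isomorphism $\gsmith \isom \gmax$ and an explicit generating set for $\gsmith$ have been established. First I would invoke \thmref{gmax_isom_thm}: because the map $\phi$ there is a group isomorphism between finite Abelian groups, $\gmax$ and $\gsmith$ have the same cardinality, so it suffices to compute $|\gsmith|$.

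Next I would count $\gsmith$ directly from the generators exhibited in the preceding lemma, namely $\mathbf{x}_{i} = (0, \dots, \frac{1}{a_{i}}, \dots, 0)$. Each $\mathbf{x}_{i}$ generates a cyclic subgroup of $(\Q/\Z)^{n}$ of order exactly $a_{i}$, since $\frac{1}{a_{i}}$ has additive order $a_{i}$ in $\Q/\Z$. Because these generators are supported on distinct coordinates, the cyclic subgroups they generate meet only in $\mathbf{0}_{n}$ and $\gsmith$ is their internal direct product; equivalently, as already noted in the text, $\gsmith \isom \Z/a_{1}\Z \times \dots \times \Z/a_{n}\Z$. The order of a finite direct product is the product of the orders of its factors, so $|\gsmith| = \prod_{i=1}^{n} a_{i}$.

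Finally, since $D = \mathrm{diag}[a_{i}]_{i=1}^{n}$ is diagonal, its determinant is $\prod_{i=1}^{n} a_{i}$, which chains together with the two previous observations to give $|\gmax| = |\gsmith| = \prod_{i=1}^{n} a_{i} = \det(D)$, as claimed. This also recovers \lemref{size_gmax} in the invertible case, where $m = n$, the Smith form is $S = PAQ$ with $\det(S) = \pm\det(A)$, and $\det(P) = \det(Q) = \pm 1$.

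The only step requiring any real care—the hard part, such as it is—is ensuring that the count $\prod_{i=1}^{n} a_{i}$ is exact rather than merely an upper bound: one must confirm that no nontrivial combination $\sum_{i} c_{i}\mathbf{x}_{i}$ with $0 \le c_{i} < a_{i}$ already lies in $\Z^{n}$, which is exactly the coordinatewise independence of the generators and is precisely what the stated decomposition into invariant factors encodes. Everything else is the routine observation that isomorphic finite groups share the same order.
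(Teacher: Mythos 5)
Your proposal is correct and follows essentially the same route the paper takes: the paper states this lemma without a separate proof, as an immediate consequence of the decomposition $\gsmith \isom \Z/a_{1}\Z \times \dots \times \Z/a_{n}\Z$ noted just beforehand together with the isomorphism of \thmref{gmax_isom_thm}, and your argument is exactly that chain with the counting of $|\gsmith|$ from the coordinatewise generators made explicit. Your added check that $\prod_{i=1}^{n} a_{i}$ is an exact count (distinct tuples $(c_{1},\dots,c_{n})$ with $0 \le c_{i} < a_{i}$ give distinct elements of $(\Q/\Z)^{n}$) is a worthwhile detail the paper leaves implicit, but it is not a different approach.
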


Considering the isomorphism from $\gsmith$ to $\gmax$, we can now write (using column vectors)
\begin{align*}
\gmax = \langle Q(1/a_{1}, 0, \dots, 0)^{T}, Q(0, 1/a_{2}, 0, \dots, 0)^{T}, \dots, Q(0, \dots, 0, 1/a_{n})^{T} \rangle .
\end{align*}
We see that $\gmax$ is generated by at most $n$ elements. Also, if any of the $a_{i} = 1$, then $\gmax$ is generated by fewer than $n$ elements. In general, this will give us a minimal set of generators for $\gmax$.

We now have an algorithm to compute the generators of $\gmax$. It accepts as input the polynomial exponent matrix $A$, but unlike Algorithm \ref{alg:submatrix-gmax-improved} which outputs the entire group it just outputs a set of column vectors that generate $\gmax$.

\begin{algorithm}[H]
  \caption{Compute the generators of $\gmax$ via the Smith normal form
    \label{alg:smith-form-gmax}}
  \begin{algorithmic}[1]
    \Statex
    \Function{GmaxGens}{$A$}
      \State Compute the Smith normal form $S = P A Q$ \Comment{where $\{a_{i}\}_{i = 1}^{n}$ are the invariant factors}
      \State Scale the $i$th column of $Q$ by $1 / a_{i}$ for each $a_{i} \ne 1$
      \State \Return{the scaled columns of $Q$}
    \EndFunction
  \end{algorithmic}
\end{algorithm}

We will once again see how the algorithm works on an example.

\begin{example}
Consider again the polynomial $W = x^{3} + y^{3} + x^{2}y$, with exponent matrix
\begin{align*}
A = \begin{bmatrix}3 & 0 \\ 0 & 3 \\ 2 & 1 \end{bmatrix}.
\end{align*}
We compute the Smith normal form $S = PAQ$ and get
\begin{align*}
\begin{bmatrix}1 & 0 \\ 0 & 3 \\ 0 & 0 \end{bmatrix} = \begin{bmatrix}0 & 0 & 1 \\1 & 0 & 0 \\ 2 & 1 & -3 \end{bmatrix} \begin{bmatrix}3 & 0 \\ 0 & 3 \\ 2 & 1 \end{bmatrix}\begin{bmatrix}0 & 1 \\ 1 & -2 \end{bmatrix}.
\end{align*}
Scaling the first column of $Q$ by 1 and the second column of $Q$ by $1/3$ (again writing column vectors as row vectors), we get the group generators $(0,1)$ and $\left(\frac{1}{3}, - \frac{2}{3} \right)$. Putting these in canonical form, we see that the first generator is (0,0) which contributes nothing. The second generator becomes $\left(\frac{1}{3}, \frac{1}{3} \right)$, which gives us
\[G_{W}^{\max} = \{(0,0), (1/3, 1/3), (2/3,2/3)\}\]
as we saw in \exampleref{ex-alg-1}.
\end{example}

To analyze this algorithm, we take advantage of the following notation.

\begin{definition}[Soft-Oh Notation]
We write $f(n) \in O^{\sim}(g(n))$ if $f(n) \in O(g(n) \log^{k}g(n))$ for some $k$. 
\end{definition}

We will also need a way to compute the Smith normal form of a matrix. Deterministic polynomial time algorithms exist to compute the Smith normal form of a matrix over $\Z$. One such algorithm is given in the following

\begin{theorem}[\cite{Sto}]\label{stor_alg}
Let $M$ be an $m \times n$ matrix over $\Z$. Suppose two $m \times m$ matrices can be multiplied in $O(m^{\theta})$ steps over $\Z$, and that $B(t)$ bounds the cost of multiplying two $\lceil t \rceil$-bit integers. Then the time required to compute the Smith normal form of $M$ is given by
\begin{align*}
O^{\sim}(m^{\theta - 1}n \cdot B(m \log \|A\|)).
\end{align*}
\end{theorem}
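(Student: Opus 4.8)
The plan is to prove the bound by separating two independent cost contributions and then multiplying them: the number of arithmetic operations over $\Z$ performed by the algorithm, and the bit-cost of each such operation. The target $O^{\sim}(m^{\theta - 1}n \cdot B(m\log\|M\|))$ should be read as (ring operations) $\times$ (cost per operation on integers of the relevant magnitude), with the soft-Oh absorbing the poly-logarithmic number of elimination phases and the poly-logarithmic overhead hidden in $B$. So the two claims to establish are: (i) every integer manipulated during the computation has bit-length $O^{\sim}(m\log\|M\|)$, so each ring operation costs $B(m\log\|M\|)$; and (ii) the whole reduction can be organized into block eliminations whose total number of ring operations is $O^{\sim}(m^{\theta-1}n)$ by reduction to fast matrix multiplication.

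For (i) I would first bound the size of the output and of every admissible intermediate quantity. By Hadamard's inequality, each $k \times k$ minor of $M$ has absolute value at most $(k^{1/2}\|M\|)^{k} \le (m^{1/2}\|M\|)^{m}$, hence bit-length $O(m\log m + m\log\|M\|) = O^{\sim}(m\log\|M\|)$. Since each invariant factor $a_i$ divides the $i$th determinantal divisor (the gcd of the $i \times i$ minors), all of the $a_i$ obey this bound, and the largest invariant factor $a_n$ divides any nonzero $n \times n$ minor. The crucial step is then to run the elimination modulo a precomputed integer $d$ that is a multiple of every invariant factor --- the determinant of a full-rank $n \times n$ submatrix serves, since it is a multiple of $a_n$ and hence of all $a_i$ --- so that every intermediate value lives in $\Z / d\Z$ and cannot exceed the Hadamard bound. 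This is exactly what defeats the coefficient swell that renders naive fraction-free elimination exponential.

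For (ii) I would reduce $M$ to diagonal form by a poly-logarithmic number of block eliminations rather than $m$ scalar pivot steps. Working over $\Z/d\Z$, one clears a block of columns at a time using the inverse (or a unimodular-completion surrogate) of a pivot block; each block clear is a product of matrices of size at most $m$, so with fast matrix multiplication applied blockwise the total ring-operation count over the full $m \times n$ array is $O^{\sim}(m^{\theta-1}n)$, matching the claimed factor and collapsing to $O^{\sim}(m^{\theta})$ when $n=m$. After the diagonal entries are obtained, I enforce the divisibility chain $a_{i-1}\mid a_i$ by the standard pairwise $(\gcd,\operatorname{lcm})$ rewriting of adjacent diagonal pairs, whose cost is dominated by the elimination.

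The main obstacle is establishing (i) and (ii) \emph{simultaneously}: the modular reduction that keeps integers small must be compatible with the block structure that enables fast matrix multiplication, and the pivot blocks used to clear columns must remain invertible modulo $d$, or be completable to unimodular transforms, without reintroducing large integers. This compatibility is precisely Storjohann's technical contribution --- a deterministic conditioning of the matrix together with the choice and exploitation of the modulus $d$ --- and reconstructing that conditioning argument, rather than the surrounding bookkeeping, is where the real work lies. I would invoke \cite{Sto} for the detailed conditioning lemmas and verify only that their costs fold into the stated $O^{\sim}$ bound.
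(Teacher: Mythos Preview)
The paper does not prove this theorem at all: it is stated with attribution to \cite{Sto} and invoked as a black box in the running-time analysis of Algorithm~\ref{alg:smith-form-gmax} (Theorem~\ref{smith_form_analysis}). Your sketch is a reasonable high-level outline of the ideas underlying Storjohann's algorithm --- controlling coefficient growth by working modulo a determinantal bound, and organizing the elimination into block operations so that fast matrix multiplication yields the $m^{\theta-1}n$ factor --- but none of this appears in the paper, and you yourself ultimately defer the hard compatibility step to \cite{Sto}, which is exactly what the paper does from the outset.
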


We are now ready to analyze Algorithm \ref{alg:smith-form-gmax}.

\begin{theorem}\label{smith_form_analysis}
Algorithm \ref{alg:smith-form-gmax} runs in polynomial time.
\end{theorem}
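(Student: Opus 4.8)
The plan is to observe that Algorithm \ref{alg:smith-form-gmax} consists of one potentially expensive step---computing the Smith normal form of $A$ on line 2---followed by the cheap bookkeeping of scaling $n$ columns of $Q$ on lines 3--4. I would therefore first invoke \thmref{stor_alg} to bound the cost of line 2, and then argue separately that lines 3--4 contribute only polynomially many additional operations on numbers of polynomial bit-length. Recall the input-size convention established earlier: $A$ has $mn$ integer entries, each of bit-length $O(\log \|A\|)$, so the input occupies $\Theta(mn \log \|A\|)$ bits, and ``polynomial time'' will mean polynomial in this quantity.

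For line 2, \thmref{stor_alg} gives a running time of $O^{\sim}(m^{\theta - 1} n \cdot B(m \log \|A\|))$. Here $\theta$ is the matrix-multiplication exponent, a fixed constant with $2 \le \theta \le 3$, so $m^{\theta - 1} n$ is polynomial in $m$ and $n$. Moreover $B(t)$, the bit-cost of multiplying two $t$-bit integers, is polynomial in $t$ (indeed $B(t) = O^{\sim}(t)$ using fast integer multiplication, and $O(t^2)$ by the schoolbook method). Since the argument $m \log \|A\|$ is itself polynomial in the input size, $B(m \log \|A\|)$ is polynomial in the input size, and hence the entire Storjohann bound is polynomial in the input size as well. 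This disposes of the dominant step.

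It remains to bound lines 3--4, and this is the step I expect to require the most care, since it forces me to control both the sizes of the invariant factors $a_i$ and the entries of $Q$. By \lemref{new_size_gmax} the product of the invariant factors equals $|\gmax| = \det(D)$, so each $a_i$ divides $\prod_j a_j$ and satisfies $\log a_i \le n \log(\sqrt{n}\,\|A\|)$ by Hadamard's inequality applied to a full-rank $n \times n$ submatrix of $A$; thus every $a_i$ has bit-length polynomial in the input. The entries of the transformation matrix $Q$ produced by Storjohann's algorithm are likewise guaranteed to be of polynomial bit-length, and confirming this bound is the crux of the argument, since a naive Smith-form computation can suffer entry explosion. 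Granting it, each of the at most $n$ scalings divides an $n$-entry integer column by some $a_i$, for a total of $O(n^2)$ rational operations on numbers of polynomial bit-length, which is polynomial. Combining the two bounds shows that the whole algorithm---and in particular its output---has polynomial size and runs in polynomial time.
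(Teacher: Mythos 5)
Your proposal is correct and follows essentially the same route as the paper: the dominant cost is the Smith normal form computation on line 2, bounded via Storjohann's result (\thmref{stor_alg}), with the column scalings dismissed as polynomially many cheap operations. The differences are refinements rather than a new approach---the paper instantiates $\theta = 2.373$ (Williams) and Sch\"onhage--Strassen multiplication to state an explicit soft-Oh bound under its ``$m$ and $\|A\|$ polynomial in $n$'' convention, whereas you keep $\theta$ and $B(t)$ abstract, work with the standard bit-size convention, and add a bit-length analysis (the Hadamard bound on the $a_{i}$ and the entry sizes of $Q$) that the paper's proof silently takes for granted.
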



\begin{proof}
We see that on line 3 the algorithm performs $n^{2}$ multiplications to appropriately scale the columns of $Q$. The interesting part of the analysis comes from computing the Smith normal form on line 2. Using the method of \cite{VWill}, we can perform matrix multiplication in $O(m^{2.373})$ steps. Using the method by \cite{schonhage1971fast}, we can multiply $n$-bit vectors in $O(n \log n \log \log n)$ steps. We see that Storjohann's algorithm in \thmref{stor_alg} requires 
\[O^{\sim}(m^{2.373}n \log\|A\| \log(m \log\|A\|) \log \log (m \log \|A\|)  )\]
bit operations. Since we always have $m \ge n$, the complexity of Algorithm \ref{alg:smith-form-gmax} reduces to the complexity of computing Storjohann's algorithm. Therefore our algorithm runs in polynomial time when $m$ and $\|A\|$ are polynomial in the size of $n$.

\end{proof}

We now revisit a previous example to compare the running times of Algorithms \ref{alg:submatrix-gmax-improved} and \ref{alg:smith-form-gmax}.

\begin{example}
Consider again the polynomial $W_{n}$ from \lemref{example_poly} and \thmref{submatrix_alg_analysis} given by
\[W_{n} = x_{1}^{2n} + \dots + x_{n}^{2n} + x_{1}^{n}x_{2}^{n} + \dots + x_{n}^{n}x_{1}^{n}.\]
Here $m = 2n$, and $\lfloor \max \{ \frac{1}{q_{i}} \} \rfloor = 2n$. Therefore by \thmref{smith_form_analysis}, Algorithm \ref{alg:smith-form-gmax} runs in $O^{\sim}(n^{3.373})$ steps.
\end{example}

\section{Conclusion}

In this paper we examined two ways to algorithmically compute $\gmax$ in the case where $W$ is a noninvertible polynomial. The natural extension of the algorithm for invertible polynomials proved to be intractable in some cases, while a new algorithm based on the Smith normal form turned out to have a polynomial running time.



\begin{thebibliography}{16}
\providecommand{\natexlab}[1]{#1}
\providecommand{\url}[1]{\texttt{#1}}
\expandafter\ifx\csname urlstyle\endcsname\relax
  \providecommand{\doi}[1]{doi: #1}\else
  \providecommand{\doi}{doi: \begingroup \urlstyle{rm}\Url}\fi

\bibitem[Artebani et~al.(2014)Artebani, Boissi\`{e}re, and Sarti]{ABS11}
M.~Artebani, S.~Boissi\`{e}re, and A.~Sarti.
\newblock The berglund-h{\"u}bsch-chiodo-ruan mirror symmetry for k3 surfaces.
\newblock \emph{Journal de Math\'ematiques Pures et Appliqu\'ees}, 102\penalty0
  (4):\penalty0 758–781, 2014.

\bibitem[Berglund and Henningson(1995)]{Hen}
P.~Berglund and M.~Henningson.
\newblock Landau-ginzburg orbifolds, mirror symmetry and the elliptic genus.
\newblock \emph{Journal de Math\'ematiques Pures et Appliqu\'ees}, 433\penalty0
  (2):\penalty0 311–332, 1995.

\bibitem[Berglund and H{\"u}bsch(1993)]{BH}
P.~Berglund and T.~H{\"u}bsch.
\newblock A generalized construction of mirror manifolds.
\newblock \emph{Nucl. Phys. B}, 393:\penalty0 377–391, 1993.

\bibitem[Cordner(2015)]{Cor15}
N.~Cordner.
\newblock Transposing noninvertible polynomials.
\newblock \emph{Rose-Hulman Undergraduate Mathematics Journal}, 16\penalty0
  (2):\penalty0 54--66, 2015.

\bibitem[Fan et~al.(2013)Fan, Jarvis, and Ruan]{FJR07}
H.~Fan, T.~J. Jarvis, and Y.~Ruan.
\newblock The witten equation, mirror symmetry and quantum singularity theory.
\newblock \emph{Annals of Mathematics}, 178\penalty0 (1):\penalty0 1–106,
  2013.

\bibitem[Francis et~al.(2012)Francis, Jarvis, Johnson, and Suggs]{FJJS11}
A.~Francis, T.~Jarvis, D.~Johnson, and R.~Suggs.
\newblock Landau-ginzburg mirror symmetry for orbifolded frobenius algebras.
\newblock \emph{Proceedings of Symposia in Pure Mathematics}, 85:\penalty0
  333–353, 2012.

\bibitem[Intriligator and Vafa(1990)]{IV}
K.~Intriligator and C.~Vafa.
\newblock Landau-ginzburg orbifolds.
\newblock \emph{Nuclear Phys. B}, 339\penalty0 (1):\penalty0 95–120, 1990.

\bibitem[Kaufmann(2002)]{Ka1}
R.~Kaufmann.
\newblock Orbifold frobenius algebras, cobordisms and monodromies.
\newblock \emph{Cont. Math}, 310:\penalty0 135–161, 2002.

\bibitem[Kaufmann(2003)]{Ka2}
R.~Kaufmann.
\newblock Orbifolding frobenius algebras.
\newblock \emph{Internat. J. Math.}, 14\penalty0 (6):\penalty0 573–617, 2003.

\bibitem[Kaufmann(2006)]{Ka3}
R.~Kaufmann.
\newblock Singularities with symmetries, orbifold frobenius algebras and mirror
  symmetry.
\newblock \emph{Cont. Math.}, 403:\penalty0 67--116, 2006.

\bibitem[Krawitz(2010)]{Kra09}
M.~Krawitz.
\newblock \emph{FJRW rings and Landau-Ginzburg mirror symmetry}.
\newblock PhD thesis, University of Michigan, 2010.

\bibitem[Kreuzer and Skarke(1992)]{KS}
M.~Kreuzer and H.~Skarke.
\newblock On the classification of quasihomogeneous functions.
\newblock \emph{Comm. Math. Phys.}, 150\penalty0 (1):\penalty0 137–147, 1992.

\bibitem[Sch{\"o}nhage and Strassen(1971)]{schonhage1971fast}
A.~Sch{\"o}nhage and V.~Strassen.
\newblock Fast multiplication of large numbers.
\newblock \emph{Computing}, 7\penalty0 (3-4):\penalty0 281--+, 1971.

\bibitem[Storjohann(1996)]{Sto}
A.~Storjohann.
\newblock Near optimal algorithms for computing smith normal forms of integer
  matrices.
\newblock \emph{Proceedings of ISSAC'96 (Z{\"u}rich)}, page 267–274, 1996.

\bibitem[Webb(2013)]{Webb}
R.~Webb.
\newblock Private communication, 2013.

\bibitem[Williams(2012)]{VWill}
V.~V. Williams.
\newblock Multiplying matrices faster than coppersmith-winograd.
\newblock \emph{STOC '12 Proceedings of the forty-fourth annual ACM symposium
  on Theory of computing}, pages 887--898, 2012.

\end{thebibliography}
\end{document}